\theoremstyle{plain}
\newtheorem{thm}{Theorem}
\newtheorem{lem}[thm]{Lemma}
\newtheorem{cor}[thm]{Corollary}
\newtheorem{prop}[thm]{Proposition}
\newtheorem{remark}[thm]{Remark}
\newtheorem{ex}[thm]{Example}
\newtheorem{method}[thm]{Method}
\newtheorem{result}[thm]{Result}
\numberwithin{thm}{section}
\numberwithin{equation}{section}
\newcommand{\A}{{\mathbb A}}
\newcommand{\FF}{{\mathbb F}}
\newcommand{\LL}{{\mathbb L}}
\newcommand{\PP}{{\mathbb P}}
\newcommand{\RR}{{\mathbb R}}
\newcommand{\ZZ}{{\mathbb Z}}
\title{Quantum Field Theory over $\FF_q$}
\author{Oliver Schnetz}\address{
Department Mathematik\\
Bismarkstra\ss e 1$\frac{1}{2}$\\
91054 Erlangen\\
Germany\\
E-mail address: schnetz@mi.uni-erlangen.de}
\begin{document}
\begin{abstract}
We consider the number $\bar N(q)$ of points in the projective complement of graph hypersurfaces over $\FF_q$ and
show that the smallest graphs with non-polynomial $\bar N(q)$ have 14 edges. We give six examples which fall
into two classes. One class has an exceptional prime 2 whereas in the other class $\bar N(q)$ depends on the number of cube roots of unity
in $\FF_q$. At graphs with 16 edges we find examples where $\bar N(q)$ is given by a polynomial in $q$ plus $q^2$ times the number of points
in the projective complement of a singular K3 in $\PP^3$.

In the second part of the paper we show that applying momentum space Feynman-rules over $\FF_q$ lets the perturbation series terminate for renormalizable
and non-re\-nor\-mal\-iz\-able bosonic quantum field theories.
\end{abstract}
\maketitle
\tableofcontents

\section{Introduction}

Inspired by the appearance of multiple zeta values in quantum field theories \cite{BK}, \cite{CENSUS} Kontsevich informally conjectured in 1997 that for every graph the number of zeros of the
graph polynomial (see Sect.\ \ref{s21} for a definition) over a finite field $\FF_q$ is a polynomial in $q$ \cite{KONT}. This conjecture puzzled graph theorists for quite a while.
In 1998 Stanley proved that a dual version of the conjecture holds for complete as well as for `nearly complete' graphs \cite{STAN}. The result was extended in 2000 by
Chung and Yang \cite{CY}. On the other hand, in 1998 Stembridge verified the conjecture by the Maple-implementation of a reduction algorithm for all graphs
with at most 12 edges \cite{STEM}. However, in 2000 Belkale and Brosnan were able to disprove the conjecture (in fact the conjecture is maximally false in a certain sense) \cite{BB}.
Their proof was quite general in nature and in particular relied on graphs with an apex (a vertex connected to all other vertices). This is not compatible with physical
Feynman rules permitting only low vertex-degree (3 or 4). It was still a possibility that the conjecture holds true for `physical' graphs where it originated from.
Moreover, explicit counter-examples were not known.

We show that the first counter-examples to Kontsevich's conjecture are graphs with 14 edges (all graphs with $\leq13$ edges are of polynomial type).
Moreover, these graphs are `physical': Among all `primitive' graphs with 14 edges in $\phi^4$-theory we find six graphs for which the number $\bar N(q)$
of points in the projective complement of the graph hypersurface (the zero locus of the graph polynomial) is not a polynomial in $q$.

Five of the six counter-examples fall into one class that has a polynomial behavior $\bar N(q)=P_2(q)$ for $q=2^k$ and $\bar N(q)=P_{\neq 2}(q)$ for all $q\neq2^k$ with
$P_2\neq P_{\neq 2}$ (although the difference between the two polynomials is minimal [Eqs.\ (\ref{22a1}) -- (\ref{22b2})])\footnote{D. Doryn proved independently in \cite{DORY1} that one
of these graphs is a counter-example to Kontsevich's conjecture.}. Of particular interest are three of the five graphs because for these the physical period is conjectured to be
a weight 11 multiple zeta value [Eq.\ (\ref{23})].
The sixth counter-example is of a new kind. One obtains three mutually (slightly) different polynomials $\bar N(q)=P_i(q)$, $i=-1,0,1$ depending on the remainder of $q$ modulo 3 [Eq.\ (\ref{22c})].

At 14 edges the breaking of Kontsevich's conjecture by $\phi^4$-graphs is soft in the sense that after eliminating the exceptional prime 2 (in the first case) or
after a quadratic field extension by cube roots of unity (leading to $q=1$ mod 3) $\bar N(q)$ becomes a polynomial in $q$.

At 16 edges we find two new classes of counter-examples. One resembles what we have found at 14 edges but provides three different polynomials depending on the remainder of $q$ modulo 4
[Eq.\ (\ref{22d})].

The second class of counter-examples from graphs with 16 edges is of an entirely new type. A formula for $\bar N(q)$ can be given that entails a polynomial in $q$ plus $q^2$ times
the number of points in the complement of a surface in $\PP^3$, Eqs.\ (\ref{22e1}) -- (\ref{22e6}). (The surface has been identified as a singular K3. It is a Kummer surface
with respect to the elliptic curve $y^2+xy=x^3-x^2-2x-1$, corresponding to the weight 2 level 49 newform \cite{BS}.)
This implies that the motive of the graph hypersurface is of non-mixed-Tate type.
The result was found by computer algebra using Prop.\ \ref{prop1} and Thm.\ \ref{thm1} which are proved with geometrical
tools that lift to the Grothendieck ring of varieties $K_0($Var$_k)$. This allows us to state the result as a theorem in the Grothendieck ring:
The equivalence class of the graph hypersurface $X$ of graph Fig.\ 1(e) minus vertex 2 is given by the Lefschetz motive $\LL=[\A^1]$ and the class $[F]$ of
the singular degree 4 surface in $\PP^3$ given by the zero locus of the polynomial
\begin{equation*}
a^2b^2+a^2bc+a^2bd+a^2cd+ab^2c+abc^2+abcd+abd^2+ac^2d+acd^2+bc^2d+c^2d^2\!,
\end{equation*}
namely (Thm.\ \ref{thm2})
\begin{eqnarray*}
[X]&=&\LL^{14}+\LL^{13}+4\LL^{12}+16\LL^{11}-8\LL^{10}-106\LL^9+263\LL^8-336\LL^7\\
&&\quad+\,316\LL^6-199\LL^5+45\LL^4+19\LL^3+[F]\LL^2+\LL+1.
\end{eqnarray*}

Although Kontsevich's conjecture does not hold in general, for physical graphs there is still a remarkable connection between $\bar N(q)$ and the quantum field theory period, Eq.\ (\ref{1a}).
In particular, in the case that $\bar N(q)$ is a polynomial in $q$ (after excluding exceptional primes and finite field extensions) we are able to predict the weight
of the multiple zeta value from the $q^2$-coefficient of $\bar N$ (see Remark \ref{rem1}).
Likewise, a non mixed-Tate $\LL^2$-coefficient $[F]$ in the above equation could indicate that the (yet unknown) period of the corresponding graph is not a multiple zeta value.

In Sect.\ \ref{sect3} we make the attempt to define a perturbative quantum field theory over $\FF_q$.
We keep the algebraic structure of the Feynman-amplitudes, interpret the integrands as $\FF_q$-valued functions and replace integrals by sums over $\FF_q$.
We prove that this renders many amplitudes zero (Lemma \ref{lem4}). In bonsonic theories with momentum independent vertex-functions only superficially convergent amplitudes survive.
The perturbation series terminates for renormalizable and non-renormalizable quantum field theories.
Only super-renormalizable quantum field theories may provide infinite (formal) power series in the coupling.
\vskip1ex

\noindent{\it Acknowledgements.}
The author is grateful for very enlightening discussions with S. Bloch and F.C.S. Brown on the algebraic nature of the counter-examples.
The latter carefully read the manuscript and made many valuable suggestions.
More helpful comments are due to S. Rams, F. Knop and P. M\"uller. H. Frydrych provided the author by a C$++$ class that facilitated
the counting in $\FF_4$ and $\FF_8$. Last but not least the author is grateful to J.R. Stembridge for making his beautiful programs publicly available
and to have the support of the Erlanger RRZE Computing Cluster with its friendly and helpful staff.

\section{Kontsevich's Conjecture}
\subsection{Fundamental Definitions and Identities}\label{s21}

Let $\Gamma$ be a connected graph, possibly with multiple edges and self-loops (edges connecting to a single vertex). We use $n$ for the number of edges of $\Gamma$.

The graph polynomial is a sum over all spanning trees $T$. Each spanning tree contributes by the product of variables corresponding to edges not in $T$,
\begin{equation}\label{1}
\Psi_\Gamma(x)=\sum_{T\,\rm span.\,tree}\;\prod_{e\not \in T}x_e.
\end{equation}
The graph polynomial was introduced by Kirchhoff who considered electric currents in networks with batteries of voltage $V_e$ and resistance $x_e$ at
each edge $e$ \cite{KIR}. The current through any edge is a rational function in the $x_e$ and the $V_e$ with
the common denominator $\Psi_\Gamma(x)$. In a tree where no current can flow the graph polynomial is 1.

The graph polynomial is related by a Cremona transformation $x\mapsto x^{-1}:=(x_e^{-1})_e$ to a dual polynomial built from the edges in $T$,
\begin{equation}\label{2}
\bar \Psi_\Gamma(x)=\sum_{T\,\rm span.\,tree}\;\prod_{e \in T}x_e\;=\;\Psi_\Gamma(x^{-1}) \prod_ex_e.
\end{equation}
The polynomial $\bar \Psi$ is dual to $\Psi$ in a geometrical sense: If the graph $\Gamma$ has a planar
embedding then the graph polynomial of a dual graph is the dual polynomial of the original graph. 
Both polynomials are homogeneous and linear in their coordinates and we have
\begin{equation}\label{2a}
\Psi_\Gamma=\Psi_{\Gamma-1}x_1+\Psi_{\Gamma/1},\quad\bar\Psi_\Gamma=\Psi_{\Gamma/1}x_1+\Psi_{\Gamma-1},
\end{equation}
where $\Gamma-1$ means $\Gamma$ with edge 1 removed whereas $\Gamma/1$ is $\Gamma$ with edge 1 contracted (keeping double edges, the graph polynomial
of a disconnected graph is zero).
The degree of the graph polynomial equals the number $h_1$ of independent cycles in $\Gamma$ whereas $\deg(\bar\Psi)=n-h_1$.

In quantum field theory graph polynomials appear as denominators of period integrals
\begin{equation}\label{1a}
P_\Gamma=\int_0^\infty\cdots\int_0^\infty\frac{{\rm d}x_1\cdots{\rm d}x_{n-1}}{\Psi_\Gamma(x)^2|_{x_n=1}}
\end{equation}
for graphs with $n=2h_1$. The integral converges for graphs that are primitive for the Connes-Kreimer coproduct
which is a condition that can easily be checked for any given graph (see Lemma 5.1 and Prop.\ 5.2 of \cite{BEK}).
If the integral converges, the graph polynomial may be replaced by its dual due to a Cremona transformation.

The polynomials $\Psi$ and $\bar \Psi$ have very similar (dual) properties. To simplify notation we mainly restrict
ourself to the graph polynomial although for graphs with many edges its dual is more tractable and was hence used in \cite{BB}, \cite{CY}, \cite{STAN}, and \cite{STEM}.

The graph polynomial (and also $\bar\Psi$) has the following basic property
\begin{lem}[Stembridge]\label{lem1}
Let $\Psi(x)=ax_ex_{e'}+bx_e+cx_{e'}+d$ for some variables $x_e$, $x_{e'}$ and polynomials $a,b,c,d$, then
\begin{equation}\label{3}
ad-bc=-\Delta_{e,e'}^2
\end{equation}
for a homogeneous polynomial $\Delta_{e,e'}$ which is linear in its variables.
\end{lem}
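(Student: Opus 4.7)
The plan is to realize $\Psi_\Gamma$ as a determinant and reduce the identity to a rank-two matrix-determinant calculation. Choose an $h_1\times n$ cycle matrix $C$ of $\Gamma$ (its rows forming a $\ZZ$-basis of the cycle space) and let $X=\mathrm{diag}(x_1,\ldots,x_n)$. A standard Cauchy--Binet argument, together with the fact that the nonvanishing $h_1\times h_1$ minors of $C$ are $\pm1$ and correspond precisely to the complements of spanning trees, yields
\[
\Psi_\Gamma(x) \;=\; \det\bigl(C X C^T\bigr).
\]
Writing $c_f$ for the $f$-th column of $C$ and $M':=\sum_{f\neq e,e'}x_f\,c_f c_f^T$, the matrix $M:=CXC^T$ is the rank-two perturbation $M = M' + x_e\,c_e c_e^T + x_{e'}\,c_{e'} c_{e'}^T$ of the symmetric matrix $M'$.

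Next I would expand $\det M$ as a polynomial in $x_e,x_{e'}$. Over the function field $k(x_f)_{f\neq e,e'}$, in which $\det M' = \Psi_{\Gamma/e/e'}$ is nonzero (it is the graph polynomial of the connected graph $\Gamma/e/e'$), the Weinstein--Aronszajn identity gives
\[
\det\!\bigl(M' + U X_0 U^T\bigr) \;=\; \det M'\cdot\det\!\bigl(I_2 + X_0 N\bigr),
\]
where $U=[c_e,c_{e'}]$, $X_0=\mathrm{diag}(x_e,x_{e'})$, and $N=U^T(M')^{-1}U$. Expanding $\det(I_2+X_0 N)$ and comparing coefficients with $\Psi_\Gamma = ax_ex_{e'}+bx_e+cx_{e'}+d$ identifies
\[
d=\det M',\qquad b=(\det M')N_{11},\qquad c=(\det M')N_{22},\qquad a=(\det M')\det N.
\]
Symmetry of $M'$ forces $N_{12}=N_{21}$, so $\det N = N_{11}N_{22}-N_{12}^2$ and
\[
ad-bc \;=\; -\bigl((\det M')\,N_{12}\bigr)^2.
\]
Setting $\Delta_{e,e'} := (\det M')N_{12} = c_e^{T}\,\mathrm{adj}(M')\,c_{e'}$, which is manifestly a polynomial in the remaining variables, clears the passage through the function field, so the identity $ad-bc=-\Delta_{e,e'}^2$ holds as an equality of polynomials in the $x_f$.

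It remains to verify that $\Delta_{e,e'}$ is linear in each $x_f$ with $f\neq e,e'$. Writing $C'$ for $C$ with columns $e,e'$ removed, we have $M'=C'X'(C')^T$ with $X'=\mathrm{diag}(x_f)_{f\neq e,e'}$, and each entry of $\mathrm{adj}(M')$ is, up to sign, the determinant of a deleted row/column minor of $M'$. Cauchy--Binet exhibits each such $(h_1-1)\times(h_1-1)$ minor as $\sum_{|S|=h_1-1}(\text{integer})\prod_{f\in S}x_f$, so it is linear in each $x_f$; pairing against the fixed integer entries of $c_e,c_{e'}$ preserves this property, and the total degree $h_1-1$ is half the degree $2h_1-2$ of $ad-bc$, as required. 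The main technical obstacle is the possible non-invertibility of $M'$; this is handled by the generic argument above (both sides of the identity are polynomials, and the equality is established on the Zariski-dense locus $\det M'\neq 0$), but one could alternatively avoid inverting $M'$ altogether by writing the rank-two expansion directly in terms of $\mathrm{adj}(M')$ and its $2\times 2$ minors.
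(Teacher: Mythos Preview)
Your argument is correct. The determinantal representation $\Psi_\Gamma=\det(CXC^T)$ via a $\ZZ$-basis of the cycle lattice is standard (it appears, for instance, in \cite{BEK}), and the rank-two Weinstein--Aronszajn expansion together with the symmetry $N_{12}=N_{21}$ cleanly produces $ad-bc=-\bigl(c_e^T\mathrm{adj}(M')c_{e'}\bigr)^2$. Your treatment of the degenerate case $\det M'\equiv 0$ is also fine: the identity is polynomial in the entries of a generic symmetric matrix $M'$, so one proves it on the dense locus $\det M'\neq 0$ over indeterminate entries and then specializes to $M'=\sum_{f\neq e,e'}x_f\,c_fc_f^T$; the alternative you mention via $\mathrm{adj}(M')$ directly would also work. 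The Cauchy--Binet argument for multilinearity and homogeneity of $\Delta_{e,e'}$ is correct.

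Compared with the paper: the paper does not give a self-contained proof but simply invokes Stembridge's Theorem~3.8 for the dual polynomial $\bar\Psi$ and then transfers the statement to $\Psi$ via the Cremona transformation $x\mapsto x^{-1}$ of Eq.~(\ref{2}). Stembridge's own proof is essentially the same determinantal computation you carry out, but performed on the reduced incidence (Laplacian) side, where $\bar\Psi_\Gamma=\det(\tilde B\,X\,\tilde B^T)$ with $\tilde B$ the reduced incidence matrix. Your approach is the exact dual of this: you work directly with the cycle matrix $C$ and $\Psi_\Gamma$, so no passage through $\bar\Psi$ or Cremona is needed. The two routes are equivalent in content; yours has the minor advantage of staying on the $\Psi$ side throughout, which matches how the lemma is used later in the paper (e.g.\ in Cor.~\ref{cor3} and Thm.~\ref{thm1}).
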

\begin{proof}
For the dual polynomial this is Theorem 3.8 in \cite{STEM}\footnote{In the version of \cite{STEM} that is available on Stembridge's homepage the theorem has the number 2.8.}.
The result for $\Psi$ follows by a Cremona transformation, Eq.\ (\ref{2}).
\end{proof}
As a simple example we take $C_3$, the cycle with 3 edges.
\begin{ex}\label{ex1}
\begin{eqnarray*}
\Psi_{C_3}(x)&=&x_1+x_2+x_3,\quad \Delta_{1,2}=1,\\
\bar\Psi_{C_3}(x)&=&x_1x_2+x_1x_3+x_2x_3,\quad \Delta_{1,2}=x_3.
\end{eqnarray*}
The dual of $C_3$ is a triple edge with graph polynomial $\bar\Psi_{C_3}$ and dual polynomial $\Psi_{C_3}$.
\end{ex}

The zero locus of the graph polynomial defines an in general singular projective variety, the graph hypersurface $X_\Gamma\subset\PP^{n-1}$.
In this article we consider the projective space over the field $\FF_q$ with $q$ elements.
Counting the number of points on $X_\Gamma$ means counting the number $N(\Psi_\Gamma)$ of zeros of $\Psi_\Gamma$. In this paper we prefer to (equivalently) count the points in the complement of the graph hypersurface.

In general, if $f_1,\ldots,f_m$ are homogeneous polynomials in $\ZZ[x_1,\ldots,x_n]$ and $N(f_1,\ldots,f_m)_{\FF_q^n}$ is the number of their common zeros
in $\FF_q^n$ we obtain for the number of points in the projective complement of their zero locus
\begin{eqnarray}\label{5}
\bar N(f_1,\ldots,f_m)_{\PP\FF_q^{n-1}}&=&|\{x\in\PP\FF_q^{n-1}|\exists i:f_i(x)\neq 0\}|\nonumber\\
&=&\frac{q^n-N(f_1,\ldots,f_m)_{\FF_q^n}}{q-1}.
\end{eqnarray}
If $\bar N$ is a polynomial in $q$ so is $N$ (and vice versa). We drop the subscript $\PP\FF_q^{n-1}$ if the context is clear.

The duality between $\Psi$ and $\bar\Psi$ leads to the following Lemma (which we will not use in the following).
\begin{lem}[Stanley, Stembridge]\label{lem2}
The number of points in the complement of the graph hypersurface can be obtained from the dual surface of the graph and its minors. Namely,
\begin{equation}\label{9}
\bar N(\Psi_\Gamma)=\sum_{T,S}(-1)^{|S|}\bar N(\bar\Psi_{\Gamma/T-S})
\end{equation}
where $T\sqcup S\subset E$ is a partition of an edge subset into a tree $T$ and an arbitrary edge set $S$ and $\Gamma/T-S$ is the contraction of $T$ in $\Gamma-S$.
\end{lem}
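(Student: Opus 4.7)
The plan is a three-step reduction built around Cremona duality on the open projective torus. Write $n=|E|$ and, for $T\subseteq E$, let $U_T\subset\PP\FF_q^{n-1}$ denote the locally closed stratum where $x_e=0$ exactly for $e\in T$; for $T\subsetneq E$ this is an open projective torus on the coordinates indexed by $E\setminus T$. Also let $\mu(H):=|\{x\in U_\emptyset:\bar\Psi_H(x)\neq 0\}|$ be the torus-complement count attached to a graph $H$.

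First I would stratify $\PP\FF_q^{n-1}$ by $T$, writing $\bar N(\Psi_\Gamma)=\sum_{T\subseteq E}|\{x\in U_T:\Psi_\Gamma(x)\neq 0\}|$. Iterating the contraction recursion from (\ref{2a}) gives $\Psi_\Gamma|_{x_e=0\ \forall e\in T}=\Psi_{\Gamma/T}$ whenever $T$ is a forest; if $T$ contains a cycle then every spanning tree of $\Gamma$ must omit some edge of that cycle, whose variable has been killed, so the restriction vanishes identically and only forests contribute. Next I would apply the Cremona involution $x_e\mapsto x_e^{-1}$ on each $U_T$: identity (\ref{2}) shows that $\Psi_H$ and $\bar\Psi_H$ have the same vanishing locus on any open projective torus, so each surviving stratum count equals $\mu(\Gamma/T)$, giving
\[
\bar N(\Psi_\Gamma)=\sum_{T\text{ forest}}\mu(\Gamma/T).
\]

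Finally I would Möbius-invert $\mu$ against $\bar N$. The dual of (\ref{2a}), proved by the same spanning-tree split, reads $\bar\Psi_H|_{x_e=0}=\bar\Psi_{H-e}$, so the analogous stratification of $\PP\FF_q^{|E(H)|-1}$ yields $\bar N(\bar\Psi_H)=\sum_{S\subseteq E(H)}\mu(H-S)$ for any $H$. Boolean Möbius inversion on $2^{E(H)}$ then gives $\mu(H)=\sum_{S\subseteq E(H)}(-1)^{|S|}\bar N(\bar\Psi_{H-S})$. Substituting this with $H=\Gamma/T$, and observing that $S\subseteq E(\Gamma/T)=E\setminus T$ paired with a forest $T$ is exactly a pair $(T,S)$ with $T\sqcup S\subseteq E$ and $T$ a forest, delivers (\ref{9}).

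The only real point of care is the asymmetry that zeroing $x_e$ contracts $e$ in $\Psi$ but deletes it in $\bar\Psi$: this is precisely what forces the forest condition on the contraction part $T$ while leaving the deletion part $S$ arbitrary, and is what makes the Cremona swap produce the announced double sum rather than a single alternating sum.
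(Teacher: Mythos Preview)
Your argument is correct and complete. The paper itself does not give a proof of this lemma: its ``proof'' is a one-line citation to Stembridge's Prop.~4.1 (following Stanley). Your three steps---stratify $\PP^{n-1}$ by the coordinate tori $U_T$, use the Cremona involution from Eq.~(\ref{2}) to trade $\Psi$ for $\bar\Psi$ on each torus, then M\"obius-invert the torus count $\mu$ against $\bar N$ via the deletion identity $\bar\Psi_H|_{x_e=0}=\bar\Psi_{H-e}$---are exactly the Stanley--Stembridge mechanism, so there is nothing to contrast methodologically.

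One remark on the statement rather than the proof: your argument produces the sum over all \emph{forests} $T$ (acyclic edge subsets), which is the form that actually appears in Stembridge's Prop.~4.1, whereas the lemma as written here says ``tree $T$''. The forest version is what your stratification naturally yields, since $\Psi_\Gamma|_{x_e=0,\,e\in T}$ vanishes precisely when $T$ contains a cycle, with no connectedness constraint on $T$. So either read ``tree'' in the lemma loosely as ``acyclic edge set'', or note that you have proved the (correct) forest version. A small notational point: your $\mu(H)$ implicitly lives in the projective torus of dimension $|E(H)|-1$, not in the fixed $U_\emptyset\subset\PP^{n-1}$; this is clear from how you use it, but worth saying once to keep the M\"obius step clean when $S=E(H)$ forces an empty ambient space.
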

\begin{proof}
The prove is given in \cite{STEM} (Prop.\ 4.1) following an idea of \cite{STAN}.
\end{proof}

Calculating $\bar N(\Psi_\Gamma)$ is straightforward for small graphs. Continuing Ex.\ \ref{ex1} we find that $\Psi_{C_3}$ has $q^2$ zeros in $\FF_q^3$ (defining a hyperplane).
Therefore $\bar N(\Psi_{C_3})=(q^3-q^2)/(q-1)=q^2$.
The same is true for $\bar\Psi_{C_3}$, but here the counting is slightly more difficult. A way to find the result is to observe that whenever $x_2+x_3\neq0$ we
can solve $\bar\Psi_{C_3}=0$ uniquely for $x_1$. This gives $q(q-1)$ zeros. If, on the other hand, $x_2+x_3=0$ we conclude that $x_2=-x_3=0$ while $x_1$ remains arbitrary.
This adds another $q$ solutions such that the total is $q^2$.

A generalization of this method was the main tool in \cite{STEM} only augmented by the inclusion-exclusion formula $N(fg)=N(f)+N(g)-N(f,g)$.
We follow \cite{STEM} and denote for a fixed polynomial $f_1=g_1x_1-g_0$ with $g_1,g_0\in\ZZ[x_2,\ldots ,x_n]$ and any polynomial
$h=h_kx_1^k+h_{k-1}x_1^{k-1}+\ldots+h_0$ with $h_i\in\ZZ[x_2,\ldots ,x_n]$ the resultant of $f_1$ with $h$ as
\begin{equation}
\bar h=h_kg_0^k+h_{k-1}g_0^{k-1}g_1+\ldots +h_0g_1^k\in\ZZ[x_2,\ldots ,x_n].
\end{equation}

\begin{prop}[Stembridge]\label{prop0}
With the above notation we have
\begin{eqnarray}\label{7a}
N(f_1,\ldots,f_m)_{\FF_q^n}&=&N(g_1,g_0,f_2,\ldots,f_m)_{\FF_q^n}+N(\bar{f}_2,\ldots,\bar{f}_m)_{\FF_q^{n-1}}\nonumber\\
&&-\;N(g_1,\bar{f}_2,\ldots,\bar{f}_m)_{\FF_q^{n-1}}.
\end{eqnarray}
\end{prop}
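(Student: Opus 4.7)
The proposal is to partition the zero locus $Z(f_1,\dots,f_m)\subset\FF_q^n$ according to whether the leading coefficient $g_1(x_2,\dots,x_n)$ vanishes or not. Since $f_1=g_1x_1-g_0$ depends on $x_1$ only linearly through $g_1$, this split cleanly separates the ``degenerate'' contributions from the ``generic'' ones where $x_1$ is forced to a unique value. The key algebraic observation to use in the generic case is that the resultant $\bar h$ satisfies $\bar h(x_2,\dots,x_n)=g_1^{\deg_{x_1}(h)}\,h(g_0/g_1,x_2,\dots,x_n)$, so that on the open set $\{g_1\neq0\}$ the condition $h(g_0/g_1,x_2,\dots,x_n)=0$ is equivalent to $\bar h=0$.

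First I would handle the locus where $g_1(x_2,\dots,x_n)=0$. There $f_1$ reduces to $-g_0$, hence $f_1$ vanishes iff $g_0=0$, in which case every value of $x_1\in\FF_q$ is admissible. Since the remaining equations $f_2=\dots=f_m=0$ still involve $x_1$, the contribution is by definition
\[
N(g_1,g_0,f_2,\dots,f_m)_{\FF_q^n},
\]
the first term on the right-hand side of \eqref{7a}.

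Next I would treat the open complement $\{g_1\neq0\}$. There $f_1=0$ determines $x_1=g_0/g_1$ uniquely, so the projection onto $(x_2,\dots,x_n)$ is a bijection between the relevant part of $Z(f_1,\dots,f_m)$ and the set
\[
\{(x_2,\dots,x_n)\in\FF_q^{n-1}\mid g_1\neq0,\ \bar f_2=\dots=\bar f_m=0\},
\]
by the identity $\bar f_i=g_1^{\deg_{x_1}f_i}\,f_i(g_0/g_1,x_2,\dots,x_n)$. Counting this set by inclusion--exclusion, one removes the $g_1=0$ locus from $N(\bar f_2,\dots,\bar f_m)_{\FF_q^{n-1}}$ to obtain
\[
N(\bar f_2,\dots,\bar f_m)_{\FF_q^{n-1}}-N(g_1,\bar f_2,\dots,\bar f_m)_{\FF_q^{n-1}}.
\]
Adding the two case contributions yields \eqref{7a}.

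The only delicate point, and the one I would be most careful with, is the bookkeeping in the substitution step: one must verify that the definition of the resultant $\bar h=h_kg_0^k+\dots+h_0g_1^k$ really amounts to the homogenization of $h(g_0/g_1,\cdot)$ by the correct power of $g_1$, so that the equivalence $\bar h=0\Leftrightarrow h(g_0/g_1,\cdot)=0$ holds precisely on $\{g_1\neq0\}$. Once this is in place the rest is a clean disjoint-union count, and no issues arise at $g_1=0$ because that locus is subtracted out in the second term.
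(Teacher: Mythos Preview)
Your proof is correct and is exactly the standard argument: the paper itself does not reprove this statement but simply cites Stembridge's Proposition~2.3, whose proof is precisely the case split on $g_1=0$ versus $g_1\neq 0$ that you describe. The verification that $\bar h=g_1^{\deg_{x_1}h}\,h(g_0/g_1,\cdot)$ on $\{g_1\neq 0\}$ is the only thing to check, and you have handled it correctly.
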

\begin{proof}
Prop.\ 2.3 in \cite{STEM}.
\end{proof}
We continue to follow Stembridge and simplify the last term in the above equation. For a polynomial $h$ as defined above we write
\begin{equation}
\hat h=\left\{
\begin{array}{cl}
h_kg_0&\hbox{if }k>0\\
h_0&\hbox{if }k=0.
\end{array}\right.
\end{equation}
With this notation we obtain (Remark 2.4 in \cite{STEM})
\begin{equation}\label{60}
N(g_1,\bar{f}_2,\ldots,\bar{f}_m)=N(g_1,\hat{f}_2,\ldots,\hat{f}_m).
\end{equation}
Now we translate the above identities to projective complements, use the notation $f_1,\ldots,f_m={\bf f}_{1...m}={\bf f}$, and
add a rescaling property.
\begin{prop}\label{prop1}
Using the above notations we have for homogeneous polynomials $f_1,\ldots,f_m$
\begin{enumerate}
\item
\begin{equation}\label{6}
\bar N(f_1f_2,{\bf f}_{3...m})=\bar N(f_1,{\bf f}_{3...m})+\bar N(f_2,{\bf f}_{3...m})-\bar N(f_1,f_2,{\bf f}_{3...m})|_{\PP\FF_q^{n-1}},
\end{equation}
\item
\begin{equation}\label{7}
\bar N({\bf f})=\bar N(g_1,g_0,{\bf f}_{2...m})_{\PP\FF_q^{n-1}}+\bar N(\bar{\bf f}_{2...m})_{\PP\FF_q^{n-2}}-\bar N(g_1,\hat{\bf f}_{2...m})_{\PP\FF_q^{n-2}}.
\end{equation}
\item
If, for $I\subset \{1,\ldots ,n\}$ and polynomials $g,h\in\ZZ[(x_j)_{j\not\in I}]$, a coordinate transformation (re\-scal\-ing) $x_i\mapsto x_ig/h$ for
$i\in I$ maps ${\bf f}$ to $\tilde {\bf f}g^k/h^\ell$ with (possibly non-homogeneous) polynomials $\tilde {\bf f}$ and integers $k,\ell$ then ($\tilde{\bf f}=(\tilde f_1,\ldots,\tilde f_m)$),
\begin{equation}\label{8a}
\bar N({\bf f})_{\FF_q^n}=\bar N(gh,{\bf f})_{\FF_q^n}+\bar N(\tilde{\bf f})_{\FF_q^n}-\bar N(gh,\tilde{\bf f})_{\FF_q^n}.
\end{equation}
\end{enumerate}
\end{prop}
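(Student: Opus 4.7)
For \textbf{part (1)}, the identity is pure set-theoretic inclusion--exclusion. Writing $A=\{x\in\PP\FF_q^{n-1}:f_1(x)\neq 0\}$, $B=\{f_2\neq 0\}$, and $C=\{\exists\,i\geq 3:f_i\neq 0\}$, the left-hand side of (\ref{6}) counts $(A\cap B)\cup C=(A\cup C)\cap(B\cup C)$, while the three terms on the right count $|A\cup C|$, $|B\cup C|$, and $|A\cup B\cup C|=|(A\cup C)\cup(B\cup C)|$. Ordinary two-set inclusion--exclusion closes the argument.

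For \textbf{part (2)}, I would combine Prop.~\ref{prop0} with (\ref{60}) to obtain $N({\bf f})_{\FF_q^n}=N(g_1,g_0,{\bf f}_{2...m})_{\FF_q^n}+N(\bar{\bf f}_{2...m})_{\FF_q^{n-1}}-N(g_1,\hat{\bf f}_{2...m})_{\FF_q^{n-1}}$, and then translate each $N$ into $\bar N$ using (\ref{5}). The left-hand side and the first term on the right both sit in $\FF_q^n$, so they each bring in a $q^n$ that cancels; the last two terms both sit in $\FF_q^{n-1}$ and contribute matching $q^{n-1}$'s that cancel against each other. Dividing through by $q-1$ gives (\ref{7}) directly.

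For \textbf{part (3)}, I would partition $\FF_q^n=U\sqcup V$ with $U=\{gh\neq 0\}$. The substitution $\phi:x_i\mapsto x_i\,g(x)/h(x)$ for $i\in I$, identity on the remaining coordinates, restricts to a self-bijection of $U$: on $U$ the ratio $g/h$ is a well-defined nonzero scalar depending only on the fixed coordinates $(x_j)_{j\notin I}$, so over each such tuple $\phi$ acts as a nonzero scaling of the $I$-coordinates. The hypothesis $f_j\circ\phi=\tilde f_j\,g^{k_j}/h^{\ell_j}$, combined with the fact that $g^{k_j}/h^{\ell_j}$ is a nonzero element of $\FF_q$ on $U$, forces the vanishing loci of ${\bf f}$ and $\tilde{\bf f}$ to match on $U$, whence $\bar N({\bf f})|_U=\bar N(\tilde{\bf f})|_U$. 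For any polynomial tuple one also has $\bar N(gh,{\bf f})=|U|+\bar N({\bf f})|_V$, since every point of $U$ already satisfies $gh\neq 0$. Solving this relation (and its analogue for $\tilde{\bf f}$) for the $V$-contribution and recombining with the equality on $U$ yields (\ref{8a}).

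The \textbf{main obstacle} is the verification in part (3) that $\phi$ is a genuine set-theoretic bijection of $U$ and that the rational transformation law $f_j\circ\phi=\tilde f_j\,g^{k_j}/h^{\ell_j}$ really matches zero sets throughout $U$ (not merely on some further dense open). Once this is established, the identity reduces to a clean decomposition over $U$ and $V$. Parts (1) and (2) are comparatively routine once Stembridge's affine identity and the elementary dictionary (\ref{5}) are in hand.
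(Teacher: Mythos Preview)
Your argument is correct and follows essentially the same route as the paper: part (1) is inclusion--exclusion, part (2) is Prop.~\ref{prop0} combined with Eq.~(\ref{60}) and then translated via Eq.~(\ref{5}), and part (3) is the observation that the rescaling is a bijection of $\{gh\neq 0\}$ matching the two zero loci there, followed by inclusion--exclusion to recombine the pieces. The only cosmetic difference is that the paper phrases part (3) in terms of $N$ (zero counts) rather than $\bar N$, and your ``main obstacle'' is in fact immediate once you note that $g,h$ depend only on the coordinates $(x_j)_{j\notin I}$, so the inverse map $x_i\mapsto x_i h/g$ on $\{gh\neq 0\}$ is well defined.
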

\begin{proof}
Eq.\ (\ref{6}) is inclusion-exclusion, Eq.\ (\ref{7}) is Prop.\ \ref{prop0} together with Eq.\ (\ref{60}).
Equation (\ref{8a}) is another application of inclusion-exclusion: On $gh\neq0$ the rescaling gives an isomorphism between the varieties defined by $\bf f$ and $\tilde{\bf f}$.
Hence in $\FF_q^n$ we have $N({\bf f})=N(gh,{\bf f})+N(\tilde{\bf f}|_{gh\neq0})$ and $N(\tilde{\bf f}|_{gh\neq0})=N(\tilde{\bf f})-N(gh,\tilde{\bf f})$.
Translation to complements leads to the result.
\end{proof}
In practice, one first tries to eliminate variables using (1) and (2). If no more progress is possible one may try to proceed with (3) (see the proof of Thm.\ \ref{thm2}).
In this case it may be convenient to work with non-homogeneous polynomials in affine space. One can always swap back to projective space by
\begin{equation}\label{8b}
N({\bf f})_{\PP\FF_q^{n-1}}=N({\bf f}|_{x_1=0})_{\PP\FF_q^{n-2}}+N({\bf f}|_{x_1=1})_{\FF_q^{n-1}}.
\end{equation}
This equation is clear by geometry. Formally, it can be derived from Eq.\ (\ref{8a}) by
the transformation $x_i\mapsto x_ix_1$ for $i>1$ leading to $\tilde{\bf f}={\bf f}|_{x_1=1}$.

In the case of a single polynomial we obtain (Eq.\ (\ref{11}) is Lemma 3.2 in \cite{STEM}):
\begin{cor}\label{cor1}
Fix a variable $x_k$. Let $f=f_1 x_k+f_0$ be homogeneous, with $f_1,f_0\in\ZZ[x_1,\ldots ,\hat{x_k},\ldots ,x_n]$.
If $\deg(f)>1$ then
\begin{equation}\label{11}
\bar N(f)=q\bar N(f_1,f_0)_{\PP\FF_q^{n-2}}-\bar N(f_1)_{\PP\FF_q^{n-2}}.
\end{equation}
If $f$ is linear in all $x_k$ and $0<\deg(f)<n$ then $\bar N(f)\equiv0\mod q$.
\end{cor}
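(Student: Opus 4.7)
The plan is to derive Eq.\ (\ref{11}) directly by counting, then bootstrap it inductively to the mod-$q$ claim. For the first part, rather than specializing Prop.\ \ref{prop1}(2) to a one-polynomial list (where the empty sublists $\bar{\bf f}_{2\ldots m}$ and $\hat{\bf f}_{2\ldots m}$ cause mild notational friction), I will count $N(f)$ directly. For each point $(x_1,\ldots,\hat x_k,\ldots,x_n)\in\FF_q^{n-1}$, the number of $x_k\in\FF_q$ with $f_1x_k+f_0=0$ is $1$ if $f_1\ne0$, is $q$ if $f_1=f_0=0$, and is $0$ if $f_1=0$ but $f_0\ne0$. Summing, and using that the number of points in $\FF_q^{n-1}$ with $f_1\ne 0$ is $q^{n-1}-N(f_1)$, yields
\[
N(f)_{\FF_q^n}=q^{n-1}-N(f_1)_{\FF_q^{n-1}}+q\,N(f_1,f_0)_{\FF_q^{n-1}}.
\]
The hypothesis $\deg(f)>1$ forces $\deg(f_1)\ge1$ (and either $f_0=0$ or $\deg(f_0)\ge2$), so each of $f$, $f_1$, and the pair $(f_1,f_0)$ is homogeneous of positive degree; Eq.\ (\ref{5}) then converts all three affine counts into projective-complement counts, and rearrangement gives Eq.\ (\ref{11}).

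For the mod-$q$ statement I induct on $d=\deg(f)$, with the ambient dimension $n$ varying subject to $0<d<n$. The base case $d=1$ is immediate: $f$ cuts out a hyperplane in $\PP\FF_q^{n-1}$, so $\bar N(f)=q^{n-1}$, which is divisible by $q$ since $n\ge2$. For $d\ge2$, pick any variable $x_k$ that occurs in $f$ (one exists since $d\ge1$); multilinearity ensures that $f_1$ is a \emph{nonzero} multilinear form of exact degree $d-1$ in $n-1$ variables, and $0<d-1<n-1$ thanks to $d<n$. The inductive hypothesis gives $\bar N(f_1)_{\PP\FF_q^{n-2}}\equiv0\pmod q$, and Eq.\ (\ref{11})---available because $d\ge2$---yields
\[
\bar N(f)=q\,\bar N(f_1,f_0)_{\PP\FF_q^{n-2}}-\bar N(f_1)_{\PP\FF_q^{n-2}}\equiv 0\pmod q.
\]

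The only subtlety I anticipate is the bookkeeping of positive degrees. The hypothesis $\deg(f)>1$ in the first part is exactly what is needed so that $f_1$ has positive degree and the affine-to-projective conversion $\bar N=(q^{n'}-N)/(q-1)$ applies to $f_1$; if instead $\deg(f)=1$, then $f_1$ is a nonzero constant and a direct check shows that Eq.\ (\ref{11}) fails by $1$. Similarly, in the induction, the upper bound $d<n$ is exactly what propagates to $\deg(f_1)<n-1$, which is required to invoke the inductive hypothesis on $f_1$; the lower bound $d>0$ guarantees that some variable appears in $f$ so the step $f=f_1x_k+f_0$ with $f_1\ne0$ is legitimate.
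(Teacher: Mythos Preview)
Your proof is correct and follows essentially the same route as the paper. For Eq.~(\ref{11}) the paper invokes Prop.~\ref{prop1}(2) with $m=1$ and then observes that the complement of $\{f_1=f_0=0\}$ in $\PP\FF_q^{n-1}$ is an $\A^1$-fibration over $\PP\FF_q^{n-2}$, which is exactly your direct affine count rephrased; for the mod-$q$ statement both arguments run the same recursion $\bar N(f)\equiv-\bar N(f_1)\pmod q$ down to the linear base case $\bar N(g)=q^{n-\deg f}$.
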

\begin{proof}
We use Eq.\ (\ref{7}) for $f_1=f$. Because $\deg(f)>1$ neither $f_1$ nor $f_0$ are constants $\neq0$ in the first term on the right hand side.
Hence, a point in the complement of $f_1=f_0=0$ in $\PP\FF_q^{n-1}$ has coordinates $x$ with $(x_2,\ldots,x_n)\neq0$.
Thus $(x_2:\ldots:x_n)$ are coordinates in $\PP\FF_q^{n-2}$ whereas $x_1$ may assume arbitrary values in $\FF_q$. The second term
in Eq.\ (\ref{7}) is absent for $m=1$ and we obtain Eq.\ (\ref{11}). Moreover, modulo $q$ we have $\bar N(f)=-\bar N(f_1)_{\PP\FF_q^{n-2}}$.
We may proceed until $f_1=g$ is linear yielding $\bar N(f)=\pm\bar N(g)_{\PP\FF_q^{n-\deg(f)}}=\pm q^{n-\deg(f)}\equiv0$ mod $q$, because $\deg(f)<n$.
\end{proof}

In the case of two polynomials $f_1,f_2$ we obtain (Eq.\ (\ref{12}) is Lemma 3.3 in \cite{STEM}):
\begin{cor}\label{cor2}
Fix a variable $x_k$. Let $f_1=f_{11} x_k+f_{10}$, $f_2=f_{21} x_k+f_{20}$ be homogeneous, with $f_{11},f_{10},f_{21},f_{20},\in\ZZ[x_1,\ldots,\hat{x_k},\ldots ,x_n]$.
If $\deg(f_1)>1$, $\deg(f_2)>1$ then
\begin{equation}\label{12}
\bar N(f_1,f_2)=q\bar N(f_{11},f_{10},f_{21},f_{20})+\bar N(f_{11}f_{20}-f_{10}f_{21})-\bar N(f_{11},f_{21})|_{\PP\FF_q^{n-2}}.
\end{equation}
If $f_1,f_2$ are linear in all their variables, $f_{11}f_{20}-f_{10}f_{21}=\pm\Delta^2$, $\Delta\in\ZZ[x_1,\ldots,\hat{x_k},\ldots ,x_n]$
for all choices of $x_k$, $0<\deg(f_1)$, $0<\deg(f_2)$, and $\deg(f_1f_2)<2n-1$ then $\bar N(f_1,f_2)\equiv0\mod q$.
\end{cor}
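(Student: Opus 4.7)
The plan is first to derive Eq.\ (\ref{12}) from Prop.\ \ref{prop1} part (2), and then to obtain the congruence by reducing Eq.\ (\ref{12}) modulo $q$. With $f_1 = f_{11} x_k + f_{10}$ the Prop.\ \ref{prop1} setup gives $g_1 = f_{11}$ and $g_0 = -f_{10}$; a direct computation of resultants yields $\bar{f}_2 = f_{11}f_{20} - f_{10}f_{21}$ and $\hat{f}_2 = -f_{10}f_{21}$. Prop.\ \ref{prop1} part (2) then reads
\[
\bar N(f_1,f_2) = \bar N(f_{11}, f_{10}, f_2)_{\PP\FF_q^{n-1}} + \bar N(f_{11}f_{20} - f_{10}f_{21})_{\PP\FF_q^{n-2}} - \bar N(f_{11}, f_{10}f_{21})_{\PP\FF_q^{n-2}}.
\]
I simplify the first summand using that $f_{11}$ and $f_{10}$ do not depend on $x_k$ and are both non-constant (since $\deg(f_1) > 1$): projecting away from the point $p_k = [0{:}\ldots{:}0{:}1{:}0{:}\ldots{:}0]$ (which automatically lies in the common zero locus), the fibers are $\A^1$ parameterized by $x_k$, and over a point $[y]$ satisfying $f_{11}(y)=f_{10}(y)=0$ the constraint $f_2 = f_{21}(y) x_k + f_{20}(y) = 0$ has $q$, $1$, or $0$ solutions according to whether $f_{21}(y)=f_{20}(y)=0$, $f_{21}(y)\neq 0$, or $f_{21}(y)=0\neq f_{20}(y)$. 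Collecting cases yields
\[
\bar N(f_{11}, f_{10}, f_2)_{\PP\FF_q^{n-1}} = q\bar N(f_{11}, f_{10}, f_{21}, f_{20}) + \bar N(f_{11}, f_{10}) - \bar N(f_{11}, f_{10}, f_{21})
\]
in $\PP\FF_q^{n-2}$. Expanding $\bar N(f_{11}, f_{10}f_{21})$ via Eq.\ (\ref{6}) as $\bar N(f_{11}, f_{10}) + \bar N(f_{11}, f_{21}) - \bar N(f_{11}, f_{10}, f_{21})$ and substituting back, the $\bar N(f_{11}, f_{10})$ and $\bar N(f_{11}, f_{10}, f_{21})$ contributions cancel pairwise and Eq.\ (\ref{12}) drops out.

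For the congruence, reducing Eq.\ (\ref{12}) modulo $q$ kills the $q\bar N(f_{11}, f_{10}, f_{21}, f_{20})$ term, leaving
\[
\bar N(f_1,f_2) \equiv \bar N(f_{11}f_{20} - f_{10}f_{21}) - \bar N(f_{11}, f_{21})_{\PP\FF_q^{n-2}} \pmod{q}.
\]
By hypothesis $f_{11}f_{20}-f_{10}f_{21} = \pm\Delta^2$, and since $\Delta^2$ and $\Delta$ have the same zero locus, $\bar N(\pm\Delta^2) = \bar N(\Delta)$. Lemma \ref{lem1} ensures $\Delta$ is linear in each variable; the bound $\deg(f_1 f_2) < 2n-1$ gives $\deg(\Delta) < n-1$, so Cor.\ \ref{cor1} applied in $\PP\FF_q^{n-2}$ yields $\bar N(\Delta) \equiv 0 \pmod{q}$. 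It remains to show $\bar N(f_{11}, f_{21})_{\PP\FF_q^{n-2}} \equiv 0 \pmod{q}$, which I attack by induction on $\deg(f_1)+\deg(f_2)$: the pair $(f_{11}, f_{21})$ has strictly smaller total degree $\deg(f_1 f_2)-2$, the bound $\deg(f_{11}f_{21}) < 2(n-1)-1$ is preserved, and linearity in each variable is inherited from $f_1, f_2$. The base cases $\deg(f_i)=1$ reduce to intersections of hyperplanes with at most one hypersurface and are handled directly by Cor.\ \ref{cor1} and Eq.\ (\ref{6}).

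The hard part will be verifying that the squareness hypothesis for $(f_1, f_2)$ descends to $(f_{11}, f_{21})$ in the inductive step; for graph polynomials this follows from Dodgson-type identities among Kirchhoff minors, but in the stated generality it is not automatic. A more robust alternative is to iterate Eq.\ (\ref{12}) directly down to pairs of linear polynomials, checking at each stage that every surviving summand either carries an explicit factor of $q$ or is $\bar N(g)$ for a $g$ linear in each variable with degree strictly between $0$ and the number of its variables, so that Cor.\ \ref{cor1} repeatedly closes out the remainders.
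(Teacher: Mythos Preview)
Your derivation of Eq.~(\ref{12}) is correct and coincides with the paper's: the paper phrases it as ``double use of Eq.~(\ref{7}) and Eq.~(\ref{6})'' to reach the intermediate identity
\[
\bar N(f_1,f_2)=\bar N(f_{11},f_{10},f_{21},f_{20})_{\PP\FF_q^{n-1}}+\bar N(f_{11}f_{20}-f_{10}f_{21})_{\PP\FF_q^{n-2}}-\bar N(f_{11},f_{21})_{\PP\FF_q^{n-2}},
\]
and then replaces the first term by $q\bar N(\cdots)_{\PP\FF_q^{n-2}}$ via the same fibration argument you give.

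For the congruence, your inductive scheme is exactly the paper's, and your doubt about the descent of the squareness hypothesis is unwarranted---this is the one place where your argument stops short. For any $x_j\neq x_k$ write $f_i=a_ix_jx_k+b_ix_k+c_ix_j+d_i$. The $x_j$-cross-determinant of the pair $(f_{11},f_{21})=(a_1x_j+b_1,\,a_2x_j+b_2)$ is $a_1b_2-a_2b_1$, and this is precisely the $x_k^2$-coefficient of the $x_j$-cross-determinant of $(f_1,f_2)$. By the ``for all choices of $x_k$'' clause the latter equals $\pm\Delta_j^2$ with $\Delta_j$ at most linear in $x_k$, whence its $x_k^2$-coefficient is $\pm(\hbox{coefficient of }x_k\hbox{ in }\Delta_j)^2$. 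So the full hypothesis descends to $(f_{11},f_{21})$ in one fewer variable, and your induction closes. No Dodgson identities are needed; the ``more robust alternative'' in your last paragraph is unnecessary.

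The paper runs the same iteration (also without spelling out the descent), assuming $d_1=\deg f_1<\deg f_2=d_2$ and stopping once the smaller polynomial has degree~$1$, so that its leading coefficient is a nonzero constant. One more application of the intermediate identity above then yields $\pm[\bar N(1)_{\PP\FF_q^{n-d_1}}+\bar N(\Delta)_{\PP\FF_q^{n-d_1-1}}-\bar N(1)_{\PP\FF_q^{n-d_1-1}}]$, where $0<\deg\Delta=(d_2-d_1+1)/2<n-d_1$, so the middle term dies by Cor.~\ref{cor1} and the outer two sum to $q^{n-d_1}$. This explicit termination is cleaner than your base-case sketch, but the overall strategy is identical.
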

\begin{proof}
Double use of Eq.\ (\ref{7}) and Eq.\ (\ref{6}) lead to
\begin{eqnarray}\label{12a}
\bar N(f_1,f_2)&=&\bar N(f_{11},f_{10},f_{21},f_{20})_{\PP\FF_q^{n-1}}\nonumber\\
&&\quad+\,\bar N(f_{11}f_{20}-f_{10}f_{21})_{\PP\FF_q^{n-2}}-\bar N(f_{11},f_{21})_{\PP\FF_q^{n-2}}.
\end{eqnarray}
If $\deg(f_1)>1$, $\deg(f_2)>1$ we obtain Eq.\ (\ref{12}) in a way analogous to the proof of the previous corollary.

If $f_{11}f_{20}-f_{10}f_{21}=\pm\Delta^2$ and $\deg(f_1f_2)<2n-1$ then $\deg(\Delta)<n-1$ and the second term on the right hand side is 0 mod $q$ by
Cor.\ \ref{cor1}. We obtain $\bar N(f_1,f_2)\equiv-\bar N(f_{11},f_{21})_{\PP\FF_q^{n-2}}$ mod $q$.
Without restriction we may assume that $d_1=\deg(f_1)<d_2=\deg(f_2)$ and continue eliminating variables until $f_{11}\in\FF_q^\times$. In this situation
Eq.\ (\ref{12a}) leads to
\begin{equation}\label{12b}
\bar N(f_1,f_2)\equiv\pm[\bar N(1)_{\PP\FF_q^{n-d_1}}+\bar N(\Delta)_{\PP\FF_q^{n-d_1-1}}-\bar N(1)_{\PP\FF_q^{n-d_1-1}}]\mod q.
\end{equation}
Still $0<\deg(\Delta)=(d_2-d_1+1)/2<n-d_1$ such that the middle term vanishes modulo $q$. The first and the third term add up to $q^{n-d_1}\equiv0\mod q$
because $d_1<n-1$.
\end{proof}

We combine both corollaries with Lemma \ref{lem1} to prove that $q^2|\bar N(\Psi_\Gamma)$ for every simple\footnote{A graph is simple if it has no multiple edges or self-loops.}
graph $\Gamma$ (Eq.\ (\ref{13}) is equivalent to Thm.\ 3.4 in \cite{STEM})
\begin{cor}\label{cor3}
Let $f=f_{11} x_1x_2+f_{10} x_1+f_{01} x_2+f_{00}$ be homogeneous with $f_{11}$, $f_{10}$, $f_{01}$, $f_{00}\in\ZZ[x_3,\ldots,x_n]$.
If $\deg(f)>2$ and $f_{11}f_{00}-f_{10}f_{01}=-\Delta_{12}^2$, $\Delta_{12}\in\ZZ[x_3,\ldots,x_n]$ then
\begin{eqnarray}\label{13}
\bar N(f)&=&q^2\bar N(f_{11},f_{10},f_{01},f_{00})\nonumber\\
&&\quad+\,q[\bar N(\Delta_{12})-\bar N(f_{11},f_{01})-\bar N(f_{11},f_{10})]+\bar N(f_{11})|_{\PP\FF_q^{n-3}}.
\end{eqnarray}
If $f$ is linear in all its variables, if the statement of Lemma \ref{lem1} holds for $f$ and any choice of variables $x_e,x_{e'}$, and if $0<\deg(f)<n-1$
then $\bar N(f)\equiv0\mod q^2$. In particular $\bar N(\Psi_\Gamma)=0\mod q^2$ for every simple graph with $h_1>0$.\end{cor}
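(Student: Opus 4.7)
The plan is to derive (\ref{13}) by two rounds of variable elimination and then prove the mod-$q^{2}$ assertion by induction on $d=\deg(f)$. For (\ref{13}) I apply Cor.\ \ref{cor1} to $f$ viewed in $x_1$ (legal since $d>2>1$) to obtain
\[\bar N(f) = q\,\bar N(f_{11}x_2+f_{10},\,f_{01}x_2+f_{00}) - \bar N(f_{11}x_2+f_{10}).\]
The first summand is reduced by Cor.\ \ref{cor2} in $x_2$, using $f_{11}f_{00}-f_{10}f_{01}=-\Delta_{12}^{2}$ from Lemma \ref{lem1} together with $\bar N(-\Delta_{12}^{2})=\bar N(\Delta_{12})$; the second summand by one more application of Cor.\ \ref{cor1} in $x_2$ (legal since $d-1>1$). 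Gathering terms produces (\ref{13}).

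For the mod-$q^{2}$ statement I induct on $d$. The case $d=1$ is immediate as $\bar N(f)=q^{n-1}$ with $n\ge 3$. For $d=2$ I pick a variable $x_1$ actually occurring in $f$, apply Cor.\ \ref{cor1} to get $\bar N(f)=q\,\bar N(f_1,f_0)-\bar N(f_1)$, note that $\bar N(f_1)=q^{n-2}\equiv 0\pmod{q^{2}}$ since $n\ge 4$, and verify $\bar N(f_1,f_0)\equiv 0\pmod q$ by the mod-$q$ half of Cor.\ \ref{cor2} (the $\pm$-square identity required for eliminating any variable $x_j$ is exactly Lemma \ref{lem1} for $f$ with the pair $(x_1,x_j)$). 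For the inductive step $d\ge 3$ I apply (\ref{13}) and bound each summand modulo $q^{2}$: the $q^{2}$-term is obvious; $\bar N(\Delta_{12})\equiv 0\pmod q$ by the mod-$q$ half of Cor.\ \ref{cor1}, since $\Delta_{12}$ is multilinear in its $n-2$ variables with $0<\deg\Delta_{12}=d-1<n-2$; both $\bar N(f_{11},f_{01})$ and $\bar N(f_{11},f_{10})$ vanish mod $q$ by the mod-$q$ half of Cor.\ \ref{cor2}; and $\bar N(f_{11})\equiv 0\pmod{q^{2}}$ by the induction hypothesis applied to $f_{11}$, whose hypotheses $0<\deg f_{11}=d-2<(n-2)-1$ reduce to $d>2$ and $d<n-1$.

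The technical nucleus of this plan is the propagation of the Lemma \ref{lem1} identity to the sub-polynomials $(f_{11},f_{01})$, $(f_{11},f_{10})$ and to $f_{11}$ itself. For $(f_{11},f_{01})$ in a fresh elimination variable $x_k$ ($k\ge 3$), write $f_{11}=\alpha x_k+\beta$, $f_{01}=\gamma x_k+\delta$; the required identity $\alpha\delta-\beta\gamma=\pm\Delta^{2}$ drops out of Lemma \ref{lem1} applied to $f$ with the pair $(x_1,x_k)$. The resulting $AD-BC=-\Delta_{1k}^{2}$ is quadratic in $x_2$, and reading off the coefficient of $x_2^{2}$ on both sides gives $\alpha\delta-\beta\gamma=-P^{2}$, where $P$ is the coefficient of $x_2$ in $\Delta_{1k}$. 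The same coefficient-extraction trick, now applied to a pair $(x_k,x_l)$ with $k,l\ge 3$ and extracting the coefficient of $(x_1x_2)^{2}$, shows that $f_{11}$ itself satisfies Lemma \ref{lem1} in its $n-2$ variables, which is precisely what the recursion on $f_{11}$ needs. The ``in particular'' for simple graphs follows at once: $\Psi_\Gamma$ is multilinear, satisfies Lemma \ref{lem1} by definition, and has $\deg\Psi_\Gamma=h_1=n-V+1<n-1$ as soon as $V\ge 3$, which is forced by the presence of a cycle in a simple graph. I expect this Lemma \ref{lem1} propagation to be the only genuinely non-mechanical step; once it is in hand, the rest is careful bookkeeping in (\ref{13}).
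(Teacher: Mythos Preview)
Your proof is correct and follows the same strategy as the paper: derive (\ref{13}) by successively applying Cor.~\ref{cor1} and Cor.~\ref{cor2}, then induct on $\deg f$, using (\ref{13}) to reduce $\bar N(f)$ to $\bar N(f_{11})$ modulo $q^2$. The paper's own proof is extremely terse (``a combination of Eqs.~(\ref{11}) and (\ref{12})'' and ``by iteration we reduce the statement to $\deg(f)=2$''), and in particular it does not spell out why the iterated coefficient $f_{11}$ again satisfies the Lemma~\ref{lem1} hypothesis, nor why the pairs $(f_{11},f_{01})$, $(f_{11},f_{10})$ satisfy the $\pm\Delta^2$ condition needed for Cor.~\ref{cor2}. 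In the paper's intended application this is automatic, since by Eq.~(\ref{2a}) the coefficient $f_{11}$ is itself a graph polynomial $\Psi_{\Gamma-12}$ and Lemma~\ref{lem1} applies to it directly; your coefficient-extraction argument (reading off the $x_2^2$, resp.\ $(x_1x_2)^2$, term of the Lemma~\ref{lem1} identity for $f$) is the honest way to handle the abstract hypothesis as stated in the corollary, and is a genuine addition over the paper's sketch. One small point you may wish to note explicitly: if $f_{11}=0$ then $\bar N(f_{11})=0$ and $\bar N(f_{11},f_{0j})=\bar N(f_{0j})\equiv 0\pmod q$ by Cor.~\ref{cor1}, so the edge case causes no trouble even though the degree hypotheses of Cor.~\ref{cor2} and of the induction are not literally met.
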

\begin{proof}
Eq.\ (\ref{13}) is a combination of Eqs.\ (\ref{11}) and (\ref{12}). The second statement is trivial for $\deg(f)=1$ and straightforward for $\deg(f)=2$ using Cors.\ \ref{cor1} and \ref{cor2}.
To show it for $\deg(f)>2$ we observe that modulo $q^2$ the second term on the right hand side of Eq.\ (\ref{13}) vanishes due to Cors.\ \ref{cor1} and \ref{cor2}.
We thus have $\bar N(f)\equiv\bar N(f_{11})_{\PP\FF_q^{n-3}}$ mod $q^2$ and by iteration we reduce the statement to $\deg(f)=2$.
Any simple non-tree graph fulfills the conditions of the corollary by Lemma \ref{lem1}.
\end{proof}

The main theorem of this subsection treats the case in which a simple graph with vertex-connectivity\footnote{The vertex-connectivity is the minimal number of vertices
that, when removed, split the graph.} $\geq 2$ has a vertex with 3 attached edges (a 3-valent vertex).
We label the edges of the 3-valent vertex by 1, 2, 3 and apply Lemma \ref{lem1} with $e=1$, $e'=2$. We will prove that
\begin{eqnarray}\label{14}
\Delta_{12}&=&\Psi_{\Gamma-12/3}x_3+\Delta\quad\hbox{with}\\
\Delta&=&\frac{\Psi_{\Gamma-1/23}+\Psi_{\Gamma-2/13}-\Psi_{\Gamma-3/12}}{2}\in\ZZ[x_4,\ldots,x_n].\label{14b}
\end{eqnarray}
Here $\Gamma-1/23$ means $\Gamma$ with edge $1$ removed and edges 2, 3 contracted. Note that $\Gamma-12/3$ is the graph $\Gamma$ after the removal of the 3-valent
vertex.

\begin{thm}\label{thm1}
Let $\Gamma$ be a simple graph with vertex-connectivity $\geq 2$. Then
\begin{eqnarray}\label{13c}
\bar N(\Psi_\Gamma)&=&q^{n-1}+O(q^{n-3}),\\
\bar N(\Psi_\Gamma)&\equiv&0\mod q^2.\label{13d}
\end{eqnarray}
If $\Gamma$ has a 3-valent vertex with attached edges $1$, $2$, $3$ then
\begin{eqnarray}
\bar N(\Psi_\Gamma)&=&q^3\bar N(\Psi_{\Gamma-12/3},\Psi_{\Gamma-1/23},\Psi_{\Gamma-2/13},\Psi_{\Gamma/123})\nonumber\\
&&-\,q^2\bar N(\Psi_{\Gamma-12/3},\Psi_{\Gamma-1/23},\Psi_{\Gamma-2/13})|_{\PP\FF_q^{n-4}}\label{13b}\\
&=&q\bar N(\Psi_{\Gamma/3})_{\PP\FF_q^{n-2}}+q\bar N(\Delta_{12})_{\PP\FF_q^{n-3}}-q^2\bar N(\Delta)_{\PP\FF_q^{n-4}}.\label{15}
\end{eqnarray}
In particular,
\begin{equation}\label{15a}
\bar N(\Psi_\Gamma)\equiv q\bar N(\Delta_{12})_{\PP\FF_q^{n-3}}\equiv q^2\bar N(\Psi_{\Gamma-12/3},\Delta)_{\PP\FF_q^{n-4}} \mod q^3.
\end{equation}
If, additionally, an edge $4$ forms a triangle with edges $2$, $3$ we have
\begin{equation}\label{14a}
\delta=\frac{\Psi_{\Gamma-12/34}+\Psi_{\Gamma-24/13}-\Psi_{\Gamma-34/12}}{2}\in\ZZ[x_5,\ldots,x_n]
\end{equation}
and
\begin{eqnarray}\label{15b}
\bar N(\Psi_\Gamma)&\!\!=&\!q(q-2)\bar N(\Psi_{\Gamma-2/3})|_{\PP\FF_q^{n-3}}\nonumber\\
&&\!+\,q(q-1)[\bar N(\Psi_{\Gamma-12/3})+\bar N(\Psi_{\Gamma-24/3})]+q^2\bar N(\Psi_{\Gamma-2/34})|_{\PP\FF_q^{n-4}}\\
&&\!+\,q^2[\bar N(\Psi_{\Gamma-124/3})+\bar N(\Psi_{\Gamma-12/34})\nonumber\\
&&\quad\,-\,\bar N(\Psi_{\Gamma-124/3},\delta)-\bar N(\Psi_{\Gamma-12/34},\delta)-(q-2)\bar N(\delta)]|_{\PP\FF_q^{n-5}}.\nonumber
\end{eqnarray}
\end{thm}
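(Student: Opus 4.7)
The plan is to exploit the expansion of $\Psi_\Gamma$ in the three variables $x_1,x_2,x_3$ incident to the 3-valent vertex $v$. Iterating the contraction-deletion recursion (\ref{2a}) on each of these edges shows, first, that $\Psi_\Gamma$ has no $x_1x_2x_3$ term (removing all three edges isolates $v$), and second, that the three pair-coefficients coincide,
\begin{equation*}
\Psi_{\Gamma-12/3}=\Psi_{\Gamma-13/2}=\Psi_{\Gamma-23/1},
\end{equation*}
because each minor is obtained by absorbing the pendant $v$ into one of its three surviving neighbours, and the three results differ only by a vertex relabelling. Vertex-$2$-connectivity forces $\Gamma-v$ to be connected, so this common polynomial is nonzero. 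This graph-theoretic equality is the key combinatorial input.

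To prove (\ref{14})--(\ref{14b}) I apply Lemma \ref{lem1} to the pair $(1,2)$, giving $\Psi_{\Gamma-12}\Psi_{\Gamma/12}-\Psi_{\Gamma-1/2}\Psi_{\Gamma-2/1}=-\Delta_{12}^2$, and expand both sides in $x_3$. Writing $\Delta_{12}=c_0x_3+c_1$, the $x_3^2$-coefficient becomes $c_0^2=\Psi_{\Gamma-13/2}\Psi_{\Gamma-23/1}=\Psi_{\Gamma-12/3}^2$ by the above equality, so (fixing the sign) $c_0=\Psi_{\Gamma-12/3}$. The $x_3^1$-coefficient then reads $-2\Psi_{\Gamma-12/3}\,c_1=\Psi_{\Gamma-12/3}(\Psi_{\Gamma-3/12}-\Psi_{\Gamma-1/23}-\Psi_{\Gamma-2/13})$, and dividing through by the nonzero polynomial $\Psi_{\Gamma-12/3}$ yields $c_1=\Delta$ as in (\ref{14b}); the $x_3^0$-coefficient is automatically consistent, as it is Lemma \ref{lem1} applied to $\Gamma/3$.

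The counting identities are then mechanical applications of the reduction machinery of Prop.\ \ref{prop1}. For (\ref{13b}) I apply Cor.\ \ref{cor3} with pivot variables $(x_1,x_2)$, identify $f_{11}=\Psi_{\Gamma-12/3}\in\ZZ[x_4,\ldots,x_n]$, $f_{10}=\Psi_{\Gamma-1/2}$, $f_{01}=\Psi_{\Gamma-2/1}$, $f_{00}=\Psi_{\Gamma/12}$, and then eliminate $x_3$ from each of the resulting $\bar N$-counts via Prop.\ \ref{prop1}(2); the pair-coefficient equality is exactly what causes the $q^3$- and $q^2$-terms to collapse into the stated form. Formula (\ref{15}) is obtained more directly by applying Cor.\ \ref{cor1} to the $x_3$-linear expansion $\Psi_\Gamma=\Psi_{\Gamma-3}x_3+\Psi_{\Gamma/3}$ and then processing $\bar N(\Psi_{\Gamma-3},\Psi_{\Gamma/3})$ via Eq.\ (\ref{12a}), whose Dodgson-type middle term is precisely $\Delta_{12}^2$. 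The congruence (\ref{15a}) drops out by reducing (\ref{13b}) and (\ref{15}) modulo $q^3$, invoking Cors.\ \ref{cor1}--\ref{cor2} to dispose of the subtracted counts to the required order. Equation (\ref{13d}) is Cor.\ \ref{cor3} specialised to $\Gamma$, and (\ref{13c}) follows by combining (\ref{13d}) with the absolute irreducibility of $\Psi_\Gamma$ for vertex-$2$-connected $\Gamma$, which via Lang--Weil gives $|X_\Gamma|=q^{n-2}+O(q^{n-3})$ and hence pins down the top two coefficients of $\bar N$ as $1$ and $0$.

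The triangle case (\ref{15b}) is the main obstacle. The extra edge $4=u_2u_3$ makes contraction of $4$ identify $u_2$ with $u_3$, turning edges $2$ and $3$ into parallel edges, and this produces many coincidences among the minors arising when one substitutes the $x_4$-expansion of every minor in (\ref{13b}) and then eliminates $x_4$ via Prop.\ \ref{prop1}. Formula (\ref{14a}) for $\delta$ is just (\ref{14b}) applied to the inherited 3-valent vertex of $\Gamma/4$. The remaining work is bookkeeping: one must carefully track all coincidences and collect terms until everything collapses into the six listed $\bar N$-counts with the specific integer coefficients $q(q-2)$, $q(q-1)$, $q^2$, and $-q^2(q-2)$. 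This combinatorial collapse, rather than any single conceptual step, is where the real labour of the proof lies.
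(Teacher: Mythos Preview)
Your approach is essentially the paper's: the same expansion of $\Psi_\Gamma$ at the $3$-valent vertex, the same derivation of $\Delta_{12}=\Psi_{\Gamma-12/3}x_3+\Delta$ by reading off the $x_3$-coefficients of the Dodgson identity, and the same reduction of the counting formulae to Prop.\ \ref{prop1} and its corollaries. Your derivation of (\ref{14})--(\ref{14b}) is in fact slightly cleaner than the paper's, and your description of (\ref{13b}), (\ref{15}), (\ref{15a}), (\ref{15b}) as mechanical bookkeeping matches the paper exactly (the paper even remarks that one may use Stembridge's Maple code for this step).

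There is one genuine gap, in your argument for (\ref{13c}). Lang--Weil for an absolutely irreducible projective variety of dimension $r$ gives $|X(\FF_q)|=q^r+q^{r-1}+\cdots+1+O(q^{r-1/2})$; for $X_\Gamma$ this yields only $\bar N(\Psi_\Gamma)=q^{n-1}+O(q^{n-5/2})$, not the claimed $O(q^{n-3})$. Invoking (\ref{13d}) does not help here: $q^2\mid\bar N$ controls the \emph{bottom} of $\bar N$, not its top two coefficients, so it cannot upgrade a $q^{n-5/2}$ error to $q^{n-3}$. The paper instead uses a sharper elementary argument: irreducibility of $\Psi_\Gamma$ forces $\gcd(\Psi_1,\Psi_0)=1$ in the expansion $\Psi_\Gamma=\Psi_1x_1+\Psi_0$, so the affine zero locus of $(\Psi_1,\Psi_0)$ has codimension $2$ in $\A^{n-1}$ and hence $O(q^{n-3})$ points; plugging this into the affine form of Eq.\ (\ref{11}), $N(\Psi_\Gamma)=q^{n-1}+qN(\Psi_1,\Psi_0)-N(\Psi_1)$, gives $N(\Psi_\Gamma)=q^{n-1}+O(q^{n-2})$ in $\FF_q^n$, and dividing by $q-1$ yields (\ref{13c}). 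You should replace the Lang--Weil step by this codimension argument.

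A minor imprecision: in your route to (\ref{15}) you eliminate $x_3$ first and then one of $x_1,x_2$; the Dodgson middle term you obtain is then $\Delta_{13}^2$ (or $\Delta_{23}^2$), not $\Delta_{12}^2$. This is harmless because of the symmetry $\Psi_{\Gamma-12/3}=\Psi_{\Gamma-13/2}=\Psi_{\Gamma-23/1}$ you already established, but it is worth saying explicitly. The paper instead applies Eq.\ (\ref{13}) in $(x_1,x_2)$ first and then Eq.\ (\ref{11}) in $x_3$, which produces $\Delta_{12}$ directly.
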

\begin{proof}
A graph polynomial is linear in all its variables. Hence, a non-trivial factorization provides a partition of the graph into disjoint edge-sets and every factor is the
graph polynomial on the corresponding subgraph. The subgraphs are joined by single vertices and thus the graph has vertex-connectivity one.
Therefore, vertex-connectivity $\geq2$ implies that $\Psi_\Gamma$ is irreducible. If $\Psi=\Psi_1x_1+\Psi_0$ then $\Psi_1\neq0$ and gcd$(\Psi_1,\Psi_0)=1$.
Thus, the vanishing loci of the ideals $\langle\Psi_1\rangle$ and $\langle\Psi_1,\Psi_0\rangle$ have codimension 1 and 2 in $\FF_q^{n-1}$, respectively.
The affine version of Eq.\ (\ref{11}) is\footnote{This argument was pointed out by a referee.}
$N(\Psi)=q^{n-1}+qN(\Psi_1,\Psi_0)_{\FF_q^{n-1}}-N(\Psi_1)_{\FF_q^{n-1}}$ which gives $N(\Psi)=q^{n-1}+O(q^{n-2})$. Translation to the projective complement
yields Eq.\ (\ref{13c}) while (\ref{13d}) is Cor.\ \ref{cor3}.

Every spanning tree has to reach the 3-valent vertex. Hence $\Psi_\Gamma$ cannot have a term proportional to $x_1x_2x_3$.
Similarly, the coefficients of $x_1x_2$, $x_1x_3$, and $x_2x_3$ have to be equal to the graph polynomial of $\Gamma-12/3$.
Hence $\Psi_\Gamma$ has the following shape
\begin{equation*}
\Psi_{\Gamma-12/3}(x_1x_2\!+\!x_1x_3\!+\!x_2x_3)+\Psi_{\Gamma-1/23}x_1+\Psi_{\Gamma-2/13}x_2+\Psi_{\Gamma-3/12}x_3+\Psi_{\Gamma/123}.
\end{equation*}
From this we obtain
\begin{equation*}
\Delta_{12}^2=(\Psi_{\Gamma-12/3}x_3+\Delta)^2-\Delta^2+\Psi_{\Gamma-1/23}\Psi_{\Gamma-2/13}-\Psi_{\Gamma-12/3}\Psi_{\Gamma/123},
\end{equation*}
with Eq.\ (\ref{14b}) for $\Delta$ and non-zero $\Psi_{\Gamma-12/3}$ (because $\Gamma$ has vertex-connectivity $\geq2$).
The left hand side of the above equation is a square by Lemma \ref{lem1} which leads to Eq.\ (\ref{14}) plus
\begin{equation}\label{18}
\Psi_{\Gamma-12/3}\Psi_{\Gamma/123}-\Psi_{\Gamma-1/23}\Psi_{\Gamma-2/13}=-\Delta^2
\end{equation}
(which is Eq.\ (\ref{3}) for $\Gamma/3$). This leads to
\begin{equation}\label{19}
\Psi_{\Gamma-1/23}\Psi_{\Gamma-2/13}\equiv\Delta^2 \mod \Psi_{\Gamma-12/3}.
\end{equation}
Substitution of Eq.\ (\ref{14b}) into 4-times Eq.\ (\ref{18}) leads to
\begin{equation}\label{20}
\Psi_{\Gamma-3/12}\equiv\Psi_{\Gamma-2/13} \mod \langle\Psi_{\Gamma-12/3},\Psi_{\Gamma-1/23}\rangle,
\end{equation}
where $\langle\Psi_{\Gamma-12/3},\Psi_{\Gamma-1/23}\rangle$ is the ideal generated by $\Psi_{\Gamma-12/3}$ and $\Psi_{\Gamma-1/23}$.

A straightforward calculation eliminating $x_1$, $x_2$, $x_3$ using Eq.\ (\ref{13}) and Prop.\ \ref{prop1}
(one may modify the Maple-program available on the homepage of J.R. Stembridge to do this) leads to
\begin{eqnarray*}
\bar N(\Psi_\Gamma)&=&q^3\bar N(\Psi_{\Gamma-12/3},\Psi_{\Gamma-1/23},\Psi_{\Gamma-2/13},\Psi_{\Gamma-3/12},\Psi_{\Gamma/123})\\
&&+\,q^2\big[-\bar N(\Psi_{\Gamma-12/3},\Psi_{\Gamma-1/23},\Psi_{\Gamma-2/13},\Psi_{\Gamma-3/12})\\
&&\quad\quad+\,\bar N(\Psi_{\Gamma-12/3},\Psi_{\Gamma-1/23},\Psi_{\Gamma-2/13})+\bar N(\Psi_{\Gamma-12/3},\Delta)\\
&&\quad\quad-\,\bar N(\Psi_{\Gamma-12/3},\Psi_{\Gamma-2/13})-\bar N(\Psi_{\Gamma-12/3},\Psi_{\Gamma-1/23})\big]\Big|_{\PP\FF_q^{n-4}}.
\end{eqnarray*}
From this equation one may drop $\Psi_{\Gamma-3/12}$ by Eq.\ (\ref{20}). Now, replacing $\Delta$ by $\Delta^2$ and Eq.\ (\ref{19}) with inclusion-exclusion (\ref{6})
proves Eq.\ (\ref{13b}). Alternatively, we may use Eqs.\ (\ref{11}) and (\ref{13}) together with Eq.\ (\ref{14}) to obtain Eq.\ (\ref{15}).
By Cor.\ \ref{cor3} we have $\bar N(\Psi_{\Gamma/3})\equiv\bar N(\Psi_{\Gamma-12/3})$ $\equiv0$ mod $q^2$ and by Cor.\ \ref{cor1} we have $\bar N(\Delta)\equiv0 $ mod $q$ which makes
Eq.\ (\ref{15a}) a consequence of Eqs.\ (\ref{11}) and (\ref{15}).

The claim in case of a triangle 2, 3, 4 follows in an analogous way from Eq.\ (\ref{13b}): With the identities
\begin{eqnarray*}
&&\Psi_{\Gamma-12/3}=\Psi_{\Gamma-124/3}x_4+\Psi_{\Gamma-12/34},\quad\Psi_{\Gamma-1/23}=\Psi_{\Gamma-12/34}x_4,\\
&&\Psi_{\Gamma-2/13}=\Psi_{\Gamma-24/13}x_4+\Psi_{\Gamma-2/134},\quad\Psi_{\Gamma/123}=\Psi_{\Gamma-2/134}x_4,
\end{eqnarray*}
which follow from the definition of the graph polynomial, we prove (\ref{14a}) and
\begin{equation*}
\Psi_{\Gamma-124/3}\Psi_{\Gamma-2/134}-\Psi_{\Gamma-12/34}\Psi_{\Gamma-24/13}=-\delta^2
\end{equation*}
from Eq.\ (\ref{18}). With Prop.\ \ref{prop1} we prove Eq.\ (\ref{15b}).
\end{proof}
A non-computer proof of Eq.\ (\ref{13b}) can be found in \cite{BS}.

Every primitive $\phi^4$-graph comes from deleting a vertex in a 4-regular graph. Hence, for these graphs Eqs.\ (\ref{13b}) -- (\ref{15a}) are always applicable.
In some cases a 3-valent vertex is attached to a triangle. Then it is best to apply Prop.\ \ref{prop1} to Eq.\ (\ref{15b}) although
this equation is somewhat lengthy (see Thm.\ \ref{thm2}).

Note that Eq.\ (\ref{15a}) gives quick access to $\bar N(\Psi_\Gamma)$ mod $q^3$. In particular, we have the following corollary.
\begin{cor}\label{cor4}
Let $\Gamma$ be a simple graph with $n$ edges and vertex-connectivity $\geq2$.
If $\Gamma$ has a 3-valent vertex and $2h_1(\Gamma)<n$ then $\bar N(\Psi_\Gamma)\equiv0\mod q^3$.
\end{cor}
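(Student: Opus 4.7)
The plan is to reduce Eq.~(\ref{15a}) modulo $q^3$ and then apply the second assertion of Cor.~\ref{cor3} to the polynomial $\Delta_{12}$. The first congruence in Eq.~(\ref{15a}) reads
\[
\bar N(\Psi_\Gamma) \equiv q\,\bar N(\Delta_{12})_{\PP\FF_q^{n-3}} \mod q^3,
\]
so the statement follows once $\bar N(\Delta_{12}) \equiv 0 \mod q^2$ is established.

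To apply Cor.~\ref{cor3} to $\Delta_{12}$, regarded as a homogeneous polynomial in the $n-2$ variables $x_3,\ldots,x_n$, three hypotheses must be verified. Linearity of $\Delta_{12}$ in each $x_i$ ($i\geq 3$) is immediate from $\Delta_{12}^2 = bc - ad$: since $a,b,c,d$ are all linear in each such variable the right-hand side has degree at most two in each of them, and hence its polynomial square root $\Delta_{12}$ has degree at most one. The degree condition $0<\deg\Delta_{12}<n-3$ amounts to $2 \leq h_1 \leq n-3$, because $\deg\Delta_{12}=h_1-1$. The lower bound $h_1\geq 2$ holds since a 2-connected simple graph with a 3-valent vertex cannot be a single cycle; the upper bound follows from the hypothesis $2h_1<n$ together with the automatic inequality $n\geq 5$ (forced by the existence of a 3-valent vertex in a 2-connected simple graph), yielding $h_1-1 < n/2-1 \leq n-3$.

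The main obstacle is the remaining hypothesis: that the statement of Lemma~\ref{lem1} holds for $\Delta_{12}$ with respect to every further pair $x_e, x_{e'}$, i.e.\ writing $\Delta_{12}=Ax_ex_{e'}+Bx_e+Cx_{e'}+D$ one must produce a polynomial $\Delta'$ with $AD-BC=-\Delta'{}^2$. I would establish this iterated Dodgson identity by expanding each of $a,b,c,d$ biaffinely in $x_e, x_{e'}$, substituting into $\Delta_{12}^2=bc-ad$, reading off the coefficients $A,B,C,D$ of $\Delta_{12}$, and verifying the resulting Pl\"ucker-type identity by direct algebraic manipulation; alternatively one can invoke the fact that $\Delta_{12}$ is, up to sign, a minor of the Kirchhoff matrix whose determinant equals $\Psi_\Gamma$, so Dodgson's classical determinantal identity applies verbatim. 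Once this is in hand, Cor.~\ref{cor3} delivers $\bar N(\Delta_{12}) \equiv 0 \mod q^2$, and the corollary follows.
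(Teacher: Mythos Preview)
Your route diverges from the paper's, and the key step does not go through. The paper uses the \emph{second} congruence of Eq.~(\ref{15a}),
\[
\bar N(\Psi_\Gamma)\equiv q^2\,\bar N(\Psi_{\Gamma-12/3},\Delta)_{\PP\FF_q^{n-4}}\pmod{q^3},
\]
and then applies the Ax--Katz theorem: since $\deg(\Psi_{\Gamma-12/3})+\deg(\Delta)=(h_1-2)+(h_1-1)=2h_1-3<n-3$, one has $q\mid N(\Psi_{\Gamma-12/3},\Delta)_{\FF_q^{n-3}}$, and Eq.~(\ref{5}) converts this into $q\mid\bar N(\Psi_{\Gamma-12/3},\Delta)_{\PP\FF_q^{n-4}}$. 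That is the whole argument.

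Your approach, by contrast, needs $\Delta_{12}$ itself to satisfy the conclusion of Lemma~\ref{lem1} for every pair of remaining edge-variables, and this is \emph{false}. Take $\Gamma=K_4$ with edges $1,2,3$ at a common vertex; then $\Delta_{12}=(x_4+x_5+x_6)x_3+x_5x_6$, and expanding in $x_5,x_6$ gives $A=1$, $B=C=x_3$, $D=x_3x_4$, so $AD-BC=x_3(x_4-x_3)$, which is not $\pm$(square). Subdividing an edge to force $2h_1<n$ merely replaces $x_6$ by a sum of two new variables and changes nothing. The determinantal heuristic you invoke fails for a structural reason: $\Delta_{12}$ is the determinant of a \emph{non-principal} minor of the symmetric cycle matrix, hence of a non-symmetric matrix, and Dodgson condensation applied to it yields a product of two distinct minors rather than a square. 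This is exactly why denominator reduction in \cite{BROW}, after the very first step, tracks a product of two Dodgson polynomials rather than a single squared one. So Cor.~\ref{cor3} is not available for $\Delta_{12}$, and the argument has to go through Ax--Katz (or something of comparable strength) as in the paper.
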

\begin{proof}
We have $\deg(\Psi_{\Gamma-12/3})=h_1-2$ and $\deg(\Delta)=h_1-1$ in Eq.\ (\ref{15a}), hence $\deg(\Psi_{\Gamma-12/3})+\deg(\Delta)<n-3$.
By the Ax-Katz theorem \cite{AX}, \cite{KATZ} we obtain $N(\Psi_{\Gamma-12/3},\Delta)_{\FF_q^{n-3}}\equiv0\mod q$ such that the corollary follows from Eq.\ (\ref{5}).
\end{proof}

If $2h_1=n$ we are able to trace $\bar N$ mod $q^3$ by following a single term in the reduction algorithm (for details see \cite{BS}):
Because in the rightmost term of Eq.\ (\ref{15a}) the sum over the degrees equals
the number of variables we can apply Eq.\ (\ref{12}) while keeping only the middle term on the right hand side. Modulo $q$ the first term vanishes
trivially whereas the third term vanishes due to the Ax-Katz theorem. As long as $f_{11}f_{20}-f_{10}f_{21}$ factorizes we can continue using Eq.\ (\ref{12}) which
leads to the `denominator reduction' method in \cite{BROW}, \cite{BY} with the result given in Eq.\ (\ref{20a}).

In the next subsection we will see that $\bar N(\Psi_\Gamma)$ mod $q^3$ starts to become non-polynomial for graphs with 14 edges (and $2h_1=n$) whereas higher powers
of $q$ stay polynomial (see Result \ref{res3}). On the other hand $\bar N$ mod $q^3$ is of interest in quantum field theory. It gives access to the most singular part
of the graph polynomial delivering the maximum weight periods and we expect the (relative) period Eq.\ (\ref{1a}) amongst those.
Moreover, $\Delta_{12}^2$ [as in Eq.\ (\ref{15a})] is the denominator of the integrand after integrating over $x_1$ and $x_2$ \cite{BROW}.

For graphs that originate from $\phi^4$-theory we make the following observations:
\begin{remark}[heuristic observations]\label{rem1}
Let $\Gamma$ be a 4-regular graph minus one vertex, such that the integral Eq.\ (\ref{1a}) converges.
Let $c_2(f,q)\equiv\bar N(f)/q^2\mod q$ for $f$ the graph polynomial $\Psi_\Gamma$ or its dual $\bar\Psi_\Gamma$. We make the following heuristic observations:
\begin{enumerate}
\item $c_2(\Psi_\Gamma,q)\equiv c_2(\bar\Psi_\Gamma,q)\mod q$.
\item If $\Gamma'$ is a graph with period $P_{\Gamma'}=P_\Gamma$ [Eq.\ (\ref{1a})] then $c_2(\Psi_\Gamma,q)\equiv c_2(\Psi_{\Gamma'},q)\mod q$.
\item If $c_2(\Psi_\Gamma,q)=c_2$ is constant in $q$ then $c_2=0$ or $-1$.
\item If $c_2(\Psi_\Gamma,p^k)$ becomes a constant $\tilde c_2$ after a finite-degree field extension and excluding a finite set of primes $p$ then $\tilde c_2=0$ or $\tilde c_2=-1$.
\item If $c_2=-1$ (even in the sense of {\rm (4)}) and if the period is a multiple zeta value then it has weight $n-3$, with $n$ the number of edges of $\Gamma$.
\item If $c_2=0$ and if the period is a multiple zeta value then it may mix weights. The maximum weight of the period is $\leq n-4$.
\item One has $c_2(\Psi_\Gamma,q)\equiv\bar N(\Delta_{e,e'})/q\mod q$ for any two edges $e,e'$ in $\Gamma$ (see Eq.\ (\ref{3}) for the definition of $\Delta_{e,e'}$).
An analogous equivalence holds for the dual graph polynomial $\bar\Psi_\Gamma$ which is found to give the same $c_2\mod q$ by observation {\rm (1)}.
\end{enumerate}
\end{remark}
We can only prove the first statement of (7).
\begin{proof}[Proof of the first statement of {\rm (7)}]
By the arguments in the paragraph following Cor.\ \ref{cor4} we can eliminate variables starting from $\bar N(\Delta_{e,e'})$ keeping
only one term mod $q^2$. In \cite{BROW} it is proved that one can always proceed until five variables (including $e,e'$) are eliminated
leading to the `5-invariant' of the graph. This 5-invariant is invariant under changing the order with respect to which the variables are eliminated.
This shows that $\bar N(\Delta_{e,e'})=\bar N(\Delta_{f,f'})$ mod $q^2$ for any four edges $e,e',f,f'$ in $\Gamma$. The equivalence in (7) follows from
Eq.\ (\ref{15a}) and the fact that $\Gamma$ has (four) 3-valent vertices.
\end{proof}
By the proven part of (7) we know that `denominator reduction' \cite{BROW} of a primitive graph $\Gamma$ gives $\bar N(\Gamma)$ mod $q^3$:
If a sequence of edges leads to a reduced denominator $\psi$ in $m$ (non-reduced) variables we have
\begin{eqnarray}\label{20a}
\bar N(\Psi)&\equiv&(-1)^m\bar N(\psi)_{\PP\FF_q^{m-1}},\hbox{ if }m\geq1,\\
\bar N(\Psi)&\equiv&-\bar N(\psi)\quad\quad\quad\quad\;,\hbox{ if }\psi\in\ZZ,\nonumber
\end{eqnarray}
where $\bar N(z)$ for $z\in\ZZ$ is 1 if gcd$(z,q)=1$ and 0 otherwise. This explains observations (3) and (4) for `denominator reducible' graphs (for which there
exists a sequence of edges, such that $\psi\in\ZZ$). In this situation observations (5) and (6) are proved in \cite{BROW}.
Moreover, for a class of not too complicated graphs (6) can be explained by means of \'etale cohomology and Lefschetz's fixed-point formula \cite{DORY}.

Of particular interest will be the case when $\bar N$ is a polynomial in $q$. In this situation we have the following statement.
\begin{lem}[Stanley]\label{lem3}
For homogeneous $f_1,\ldots,f_m$ let $\bar N(f_1,$ $\ldots,$ $f_m)_{\PP\FF_q^{n-1}}=c_0+c_1q+\ldots+c_{n-1}q^{n-1}$ be a polynomial in $q$.
We obtain for the local zeta-function $Z_q(t)$ of the projective zero locus $f_1=\ldots=f_m=0$,
\begin{equation}\label{10}
Z_q(t)=\prod_{k=0}^{n-1}(1-q^kt)^{c_k-1}.
\end{equation}
By rationality of $Z_q$ \cite{DWOR} we see that all coefficients $c_k$ are integers, hence $\bar N\in\ZZ[q]$.
\end{lem}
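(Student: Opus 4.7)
The plan is to compute $Z_q(t)$ directly from its definition as the exponential generating function of point counts over extensions, and then invoke Dwork's rationality theorem to force the coefficients $c_k$ to be integers. The key input is that once $\bar N$ is known to be a polynomial in $q$ on prime powers, the count over $\FF_{q^r}$ must be $c_0+c_1q^r+\ldots+c_{n-1}q^{(n-1)r}$, since the polynomial identity propagates to every finite extension.

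First I would set $V$ equal to the projective variety cut out by $f_1=\ldots=f_m=0$ in $\PP^{n-1}$, so that $|V(\FF_{q^r})| = |\PP^{n-1}(\FF_{q^r})| - \bar N_{r}$ where $\bar N_{r}$ is the complement count over $\FF_{q^r}$. Since $|\PP^{n-1}(\FF_{q^r})| = 1 + q^r + \ldots + q^{(n-1)r}$ and, by hypothesis, $\bar N_{r} = \sum_{k=0}^{n-1} c_k q^{kr}$, we get $N_r := |V(\FF_{q^r})| = \sum_{k=0}^{n-1}(1-c_k)q^{kr}$.

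Next I would substitute this into the definition $Z_q(t) = \exp\!\left(\sum_{r\geq 1} N_r t^r/r\right)$ and swap the order of summation:
\[
\sum_{r\geq 1}\frac{N_r t^r}{r} = \sum_{k=0}^{n-1}(1-c_k)\sum_{r\geq 1}\frac{(q^k t)^r}{r} = -\sum_{k=0}^{n-1}(1-c_k)\log(1-q^k t).
\]
Exponentiating gives $Z_q(t) = \prod_{k=0}^{n-1}(1-q^k t)^{c_k-1}$, which is the asserted formula. This step is just bookkeeping; no real difficulty.

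The only subtlety lies in the last assertion. By Dwork's theorem $Z_q(t)\in\QQ(t)$, so $Z_q(t)$ factors (over $\overline\QQ$) as a product of linear factors $(t-\alpha)$ with integer multiplicities. The factors $(1-q^k t)$ for $k=0,1,\ldots,n-1$ have mutually distinct roots $q^{-k}$ (as $q\geq 2$), so they contribute independently to the factorization. Matching multiplicities, each exponent $c_k-1$ must be an integer, hence $c_k\in\ZZ$ for every $k$. The main obstacle is exactly this final rigidity argument: one has to be careful that the formal identity $Z_q(t)=\prod(1-q^kt)^{c_k-1}$, valid a priori only in $\QQ[[t]]$, genuinely forces integrality of the $c_k$; this is where the distinctness of the roots $q^{-k}$ and the unique factorization of rational functions in $\QQ(t)$ are essential.
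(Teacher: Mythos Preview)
Your proposal is correct and follows essentially the same route as the paper: compute $N_r$ from the polynomial hypothesis, plug into the defining exponential for $Z_q(t)$, sum the logarithmic series, and then invoke Dwork's rationality to force the exponents $c_k-1$ to be integers. The paper's proof is a one-line reference to this ``straightforward calculation''; you have simply spelled out the details, including the uniqueness-of-factorization step (via the distinctness of the $q^{-k}$) that the paper leaves implicit.
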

\begin{proof}
A straightforward calculation using Eq.\ (\ref{5}) shows that $Z_q(t)=\exp(\sum_{k=1}^\infty N_{\PP\FF_{q^k}^{n-1}}t^k/k)$ leads to Eq.\ (\ref{10}).
\end{proof}

We end this subsection with the following remark that will allows us to lift some results to general fields (see Thm.\ \ref{thm2}).
\begin{remark}\label{rem2}
All the results of this subsection are valid in the Gro\-then\-dieck ring of varieties over a field $k$ if $q$ is replaced by the equivalence class of the affine line $[\A_k^1]$.
\end{remark}
\begin{proof}
The results follow from inclusion-exclusion, Cartesian products, $\FF_q^\times$-fibrations which behave analogously in the Grothendieck ring.
\end{proof}

\subsection{Methods}

Our main method is Prop.\ \ref{prop1} applied to Thm.\ \ref{thm1}. Identities (1) and (2) of Prop.\ \ref{prop1} have been implemented by J.R. Stembridge in a nice
Maple worksheet which is available on his homepage. Stembridge's algorithm tries to partially eliminate variables and expand products in a balanced way
(not to generate too large expressions). But, actually, it turned out to be more efficient to completely eliminate variables and expand all products once the sequence of
variables is chosen in an efficient way. Thm.\ \ref{thm1} reflects this strategy by providing concise formulas for completely eliminating variables that are attached to a vertex
(and a triangle). A good sequence of variables will be a sequence that tries to complete vertices or cycles.
Such a sequence is related to \cite{BROW} by providing a small 'vertex-width'.
\begin{method}\label{met1}
Choose a sequence of edges $1,2,\ldots,n$ such that every sub-sequence $1,2,\ldots,k$ contains as many complete vertices and cycles as possible.
Start from Thm.\ \ref{thm1} (if possible). Pick the next variable in the sequence that can be eliminated completely (if any) and apply Prop.\ \ref{prop1} (2).
Factor all polynomials. Expand all products by Prop.\ \ref{prop1} (1).
Continue until no more variables can be eliminated completely (because no variable is linear in all polynomials).

Next, apply the above algorithm to each summand. Continue until Prop.\ \ref{prop1} (2) can no longer be applied (because no variable is linear in any polynomial).

Finally (if necessary), try to use Prop.\ \ref{prop1} (3) to modify a polynomial in such a way that it becomes linear in (at least) one variable.
If successful continue with the previous steps.
\end{method}
In most cases (depending on the chosen sequence of variables) graphs with up to 14 edges reduce completely and the above method provides a polynomial in $q$.
Occasionally one may have to stop the algorithm because it becomes too time-consuming. This depends on Maple's ability to factorize polynomials and to handle large expressions.

Working over finite fields we do not have to quit where the algorithm stops: We can still count for small $q$.
A side effect of the algorithm is that it eliminates many variables completely before it stops. This makes counting significantly faster.
If $\bar N$ is a polynomial, by Eqs.\ (\ref{13c}), (\ref{13d}) we have to determine the coefficients $c_2,c_3,\ldots,c_{n-3}$. We can do this for $n=14$ edges
by considering all prime powers $q\leq16$. By Lemma\ \ref{lem3} the coefficients have to be integers. Conversely, if interpolation does not provide
integer coefficients we know that $\bar N$ cannot be a polynomial in $q$. For graphs with 14 edges this is a time consuming though possible method even if hardly any variables
were eliminated. D. Doryn used a similar method to prove (independently) that one of the graphs obtained from deleting a vertex from Fig.\ 1(a) is a counter-example to Kontsevich's
conjecture \cite{DORY1}.

We implemented a more efficient polynomial-test that uses the heu\-ristic observation that the coefficients are not only integers but have small absolute value. This determines
the coefficients by the Chinese-Remainder-Theorem if $\bar N$ is known for a few small primes. For graphs with 14 edges it was sufficient to use $q=2$, 3, 5, and 7
because the coefficients are two-digit integers (we tested the results with $q=4$). For graphs with 16 edges we had additionally to count for $q=8$ and $q=11$.
\begin{method}\label{met2}
Select a set of small primes $p_1,p_2,\ldots,p_k$.
Evaluate $d_2(i)=\bar N(p_i)/p_i^2$ for these primes. Determine the smallest (by absolute value) common representatives $c_2$ of $d_2(i)\mod p_i$ (usually take the smallest one and maybe
the second smallest if it is not much larger than the smallest representative).
For each of the $c_2$ calculate $d_3(i)=(d_2(i)-c_2)/p_i$. Proceed as before to obtain a set of sequences $c_2$, $c_3$, $\ldots$, $c_{n-1}$.
If for one of the sequences one has $d_n(i)=0$ for all $i$ and [see Eq.\ (\ref{13c})] $c_{n-2}=0$, $c_{n-1}=1$ (and the set of sequences was not too large) then it is likely that
$\bar N(q)$ is a polynomial in $q$, namely $c_2q^2+c_3q^3+\ldots+c_{n-3}q^{n-3}+q^{n-1}\mod(q-p_1)(q-p_2)\cdots(q-p_k)$.

If $\bar N$ is a polynomial with coefficients $c_i$ such that $|c_i|<p_1p_2\cdots p_k/2$ then it is determined uniquely by the smallest representative for each $c_i$.
\end{method}

Note that one can use the above method to either test if $\bar N(q)$ is a polynomial in $q$ (this test may occasionally give a wrong answer in both directions if the set
of primes is taken too small) or to completely determine a polynomial $\bar N(q)$ with a sufficient number of primes taken into account. In any case, without a priory knowledge on the size
of the coefficients of $\bar N(q)$ the results gained with method \ref{met2} cannot be considered as mathematical truth in the strict sense.

Normally, one would use the smallest primes, but because (as we will see in the next subsection) $p=2$ may be an exceptional prime it is useful to try the method without $p=2$
if it fails when $p=2$ is included. Similarly one may choose certain subsets of primes (like $q=1$ mod 3) to identify a polynomial behavior after finite field extensions.

Because only few primes are needed to apply this method it can be used with no reduction beyond Thm.\ \ref{thm1} for graphs with up to 16 edges. Calculating modulo small
primes is fast in C$++$ and counting can easily be parallelized which makes this Method a quite practical tool.

The main problem is to find a result for $\bar N(q)$ if it is not a polynomial in $q$. It turned out that for $\phi^4$-graphs with 14 edges the deviation from being polynomial
can be completely determined mod $q^3$. This is no longer true for graphs with 16 edges, but at higher powers of $q$ we only find terms that we already had in graphs
with 14 edges (see Result \ref{res3}). Therefore a quick access to $\bar N(q)$ mod $q^3$ is very helpful.
\begin{method}\label{met3}
Determine $c_2(q)\equiv\bar N(q)/q^2 \mod q$ using Eq.\ (\ref{15a}) together with Eq.\ (\ref{12}) [or Eq.\ (\ref{20a})] and Remark \ref{rem1}.
Choose for each $q$ a representative $\tilde c_2(q)$ of $c_2(q)\mod q$. Check if $\bar N(q)/q^2-\tilde c_2(q)$ is a polynomial in $q$.
\end{method}
The result of this method obviously depends on the choice of the representatives $\tilde c_2(q)$. However, when we apply the method to examples in the next subsection we have
distinguished choices for $\tilde c_2(q)$ namely $\bar N(2)$, $\bar N(a^2+ab+b^2)$, $\bar N(a^2+b^2)$, and $\bar N(f)$ in Result \ref{res3}.

In practice it is often useful to combine the methods. Typically one would first run Method \ref{met1}. If it fails to deliver a complete reduction one may
apply Method \ref{met3} to determine its polynomial discrepancy and eventually Method \ref{met2} to determine the result.

\subsection{Results}
\begin{figure}[t]
\epsfig{file=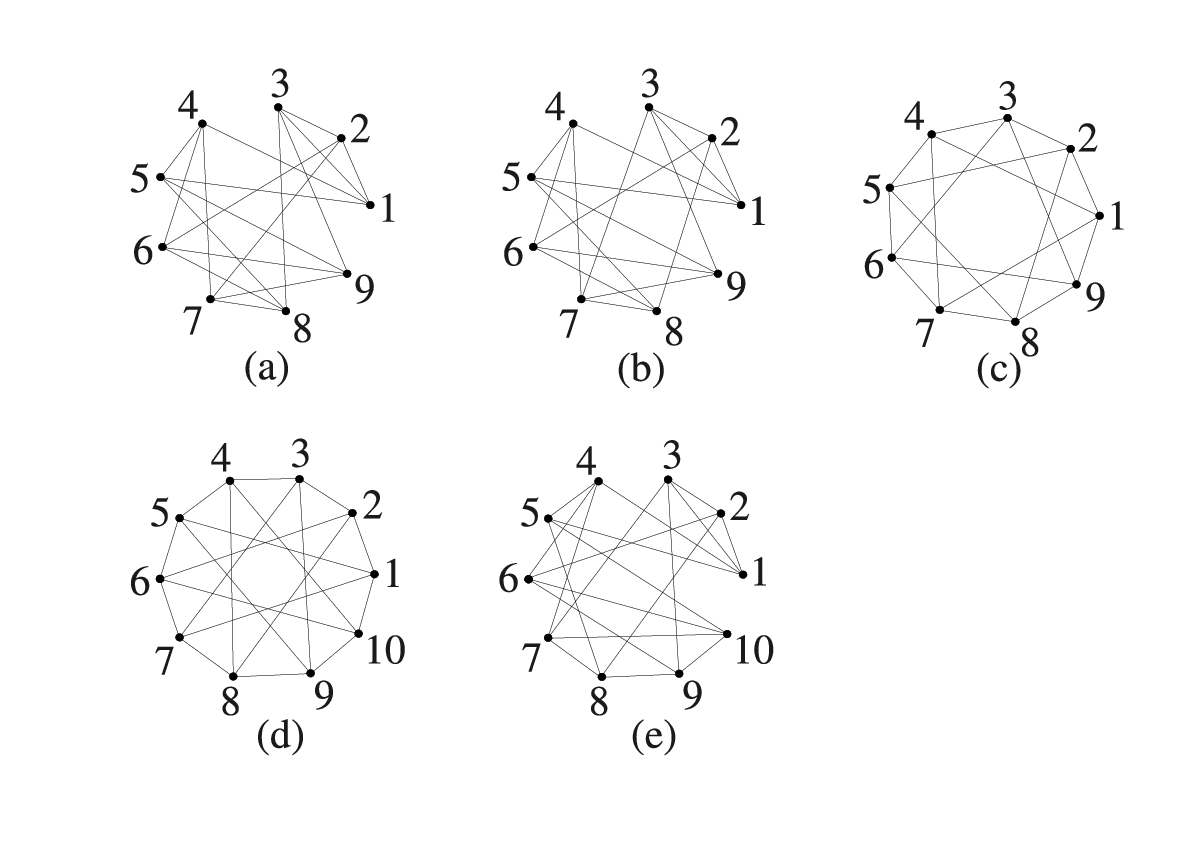,width=\textwidth}
\caption{4-regular graphs that deliver primitive $\phi^4$-graphs by the removal of a vertex. Every such $\phi^4$-graph is a counter-example to Kontsevich's conjecture.
Graphs (a) -- (c) give a total of six non-isomorphic counter-examples with 14 edges. Graphs (d), (e) provide another seven counter-examples with 16 edges.
The graph hypersurface of (e) minus any vertex entails a degree 4 non-mixed-Tate two-fold (a K3 \cite{BS}). The graphs are taken from \cite{CENSUS} where they have the names
$P_{7,8}$, $P_{7,9}$, $P_{7,11}$, $P_{8,40}$, and $P_{8,37}$, respectively. See Eqs.\ (\ref{22a1}) -- (\ref{22e6}) for the results.}
\end{figure}

First, we applied our methods to the complete list of graphs with 13 edges that are potential counter-examples to Kontsevich's conjecture. This list is due to the
1998 work by Stembridge and is available on his homepage. We found\footnote{We partly used Method \ref{met2} such that Result \ref{res1} should not be considered proven.}
that for all of these graphs $\bar N$ is a polynomial in $q$. This extends Stembridge's result \cite{STEM} from 12 to 13 edges.
\begin{result}\label{res1}
Kontsevich's conjecture holds for all graphs with $\leq13$ edges.
\end{result}

Second, we looked (using Method \ref{met2}) at all graphs with 14 edges that originate from primitive $\phi^4$-graphs [graphs with finite period (\ref{1a})].
These graphs come as 4-regular graphs with one vertex removed. They have $n=2h_1$ edges, 4 of which are 3-valent whereas all others are 4-valent.
A complete list of 4-regular graphs that lead to primitive $\phi^4$-graphs with up to 16 edges can be found in \cite{CENSUS}.
\begin{result}\label{res2}
Kontsevich's conjecture holds for all primitive $\phi^4$-graphs with 14 edges with the exception of the graphs obtained from Figs.\ 1(a) -- (c) by the removal of a vertex.
\end{result}

The counter-examples Fig.\ 1(a) -- (c) fall into two classes: One, Figs.\ 1(a), (b) with exceptional prime 2, second, Fig.\ 1(c) with a quadratic extension.
These counter-examples are the smallest counter-examples to Kontsevich's conjecture by Result \ref{res1}.

Next, we tested the power of our methods to primitive $\phi^4$-graphs with 16 edges. We scanned through the graphs with Method \ref{met3}
to see whether we find some new behavior. Only in the last five graphs of the list in \cite{CENSUS} we expect something new.
We were able to pin down the result for graphs coming from Fig.\ 1(d), (e). Figure 1(d) features a fourth root of unity extension together with an
exceptional prime 2 whereas Fig.\ 1(e) leads to a degree 4 surface in $\PP^3$ which is non-mixed-Tate.
\begin{result}\label{res3}
All graphs coming from Fig.\ 1 by the removal of a vertex are counter-examples to Kontsevich's conjecture (six with $14$ edges, seven with $16$ edges).
We list $\bar N(\Psi)/q^2$, the number of points in the projective complement of the graph hypersurface divided by $q^2$.
The second expression [in brackets] contains the result $\bar N(\bar\Psi)/q^2$ for the dual graph hypersurface.

In the following $\bar N(2)=\bar N(2)_{\PP\FF_q^0}=0$ if $q=2^k$ and $1$ otherwise,
$\bar N(a^2+ab+b^2)=\bar N(a^2+ab+b^2)_{\PP\FF_q^1}=q-\{1,0,-1\}$ if
$q\equiv1,0,-1\mod 3$, respectively,  $\bar N(a^2+b^2)=\bar N(a^2+b^2)_{\PP\FF_q^1}=q-\{1,0,-1\}$ if
$q\equiv1,0$ or $2,-1\mod 4$, respectively, and
\begin{eqnarray}\label{22}
f\;=\;f(a,b,c,d)&=&a^2b^2+a^2bc+a^2bd+a^2cd+ab^2c+abc^2\nonumber\\
&&\quad +\,abcd+abd^2+ac^2d+acd^2+bc^2d+c^2d^2.
\end{eqnarray}
\begin{eqnarray}\label{22a1}
&&\quad\quad\quad(1)\hbox{ Fig.\ 1(a) $-$ vertex $1$}\\
&&\hspace*{-14pt}q^{11}\!-\!q^8\!-\!24q^7\!+\!54q^6\!-\!36q^5\!-\!2q^4\!+\!34q^2\!-\!32q\!-\!\bar N(2)\nonumber\\
&&\hspace*{-14pt}[q^{11}\!-\!5q^8\!-\!11q^7\!+\!24q^6\!+\!q^5\!-\!50q^4\!+\!83q^3\!-\!47q^2\!-\bar N(2)]\nonumber
\end{eqnarray}
\begin{eqnarray}\label{22a2}
&&\quad\quad\quad(2)\hbox{ Fig.\ 1(a) $-$ vertex $2$, $3$, $4$, or $5$}\\
&&\hspace*{-14pt}q^{11}\!-\!3q^8\!-\!13q^7\!+\!34q^6\!-\!26q^5\!+\!13q^4\!-\!14q^3\!+\!13q^2\!-\!4q\!-\bar N(2)\nonumber\\
&&\hspace*{-14pt}[q^{11}\!-\!6q^8\!-\!6q^7\!+\!23q^6\!-\!9q^5\!-\!11q^4\!+\!10q^3\!+\!9q^2\!-\!12q\!-\bar N(2)]\nonumber
\end{eqnarray}
\begin{eqnarray}\label{22a3}
&&\quad\quad\quad(3)\hbox{ Fig.\ 1(a) $-$ vertex $6$, $7$, $8$, or $9$}\\
&&\hspace*{-14pt}q^{11}\!-\!4q^8\!-\!11q^7\!+\!38q^6\!-\!39q^5\!+\!24q^4\!-\!16q^3\!+\!11q^2\!-\!4q\!-\bar N(2)\nonumber\\
&&\hspace*{-14pt}[q^{11}\!-\!6q^8\!-\!6q^7\!+\!26q^6\!-\!12q^5\!-\!8q^4\!-\!7q^3\!+\!28q^2\!-\!16q\!-\bar N(2)]\nonumber
\end{eqnarray}
\begin{eqnarray}\label{22b1}
&&\quad\quad\quad(4)\hbox{ Fig.\ 1(b) $-$ vertex $1$, $2$, or $3$}\\
&&\hspace*{-14pt}q^{11}\!-\!3q^8\!-\!16q^7\!+\!41q^6\!-\!27q^5\!+\!q^4\!-\!5q^3\!+\!24q^2\!-\!18q\!-\bar N(2)\nonumber\\
&&\hspace*{-14pt}[q^{11}\!-\!5q^8\!-\!9q^7\!+\!28q^6\!-\!11q^5\!-\!10q^4\!+\!5q^3\!+\!13q^2\!-\!14q\!-\bar N(2)]\nonumber
\end{eqnarray}
\begin{eqnarray}\label{22b2}
&&\quad\quad\quad(5)\hbox{ Fig.\ 1(b) $-$ vertex $4$, $5$, $6$, $7$, $8$, or $9$}\\
&&\hspace*{-14pt}q^{11}\!-\!4q^8\!-\!13q^7\!+\!44q^6\!-\!46q^5\!+\!32q^4\!-\!29q^3\!+\!24q^2\!-\!9q\!-\bar N(2)\nonumber\\
&&\hspace*{-14pt}[q^{11}\!-\!5q^8\!-\!9q^7\!+\!34q^6\!-\!26q^5\!+\!5q^4\!-\!8q^3\!+\!18q^2\!-\!11q\!-\bar N(2)]\nonumber
\end{eqnarray}
\begin{eqnarray}\label{22c}
&&\quad\quad\quad(6)\hbox{ Fig.\ 1(c) $-$ any vertex}\\
&&\hspace*{-14pt}q^{11}\!-\!3q^8\!-\!15q^7\!+\!41q^6\!-\!32q^5\!+\!7q^4\!-\!3q^3\!+\!15q^2\!-\!15q\!+\bar N(a^2\!+\!ab\!+\!b^2)\nonumber\\
&&\hspace*{-14pt}[q^{11}\!-\!5q^8\!-\!9q^7\!+\!28q^6\!-\!7q^5\!-\!18q^4\!+\!3q^3\!+\!22q^2\!-\!17q\!+\bar N(a^2\!+\!ab\!+\!b^2)]\nonumber
\end{eqnarray}
\begin{eqnarray}\label{22d}
&&\quad\quad\quad(7)\hbox{ Fig.\ 1(d) $-$ any vertex}\\
&&\hspace*{-14pt}q^{13}\!-\!3q^{10}\!-\!11q^9\!+\!2q^8\!+\!90q^7\!-\!191q^6\!+\!208q^5\!-\!153q^4\!+\!79q^3\nonumber\\
&&\quad-[25+\bar N(2)]q^2\!-\!q\!+\!\bar N(a^2+b^2)\nonumber\\
&&\hspace*{-14pt}[q^{13}\!-\!7q^{10}\!-\!5q^9\!+\!9q^8\!+\!46q^7\!-\!108q^6\!+\!197q^5\!-\!294q^4\!+\!253q^3\nonumber\\
&&\quad-[105\!+\!\bar N(2)]q^2\!-\![q\!+\!8\bar N(2)]q\!+\!\bar N(a^2+b^2)]\nonumber
\end{eqnarray}
\begin{eqnarray}\label{22e1}
&&\quad\quad\quad(8)\hbox{ Fig.\ 1(e) $-$ vertex $1$}\\
&&\hspace*{-14pt}q^{13}\!-\!2q^{10}\!-\!19q^9\!+\!14q^8\!+\!103q^7\!-\!266q^6\!+\!374q^5\!-\!410q^4\!+\!322q^3\nonumber\\
&&\quad-97q^2\!-\!43q\!+\!\bar N(f)_{\PP\FF_q^3}\nonumber\\
&&\hspace*{-14pt}[q^{13}\!-\!5q^{10}\!-\!11q^9\!+\!8q^8\!+\!84q^7\!-\!187q^6\!+\!267q^5\!-\!386q^4\!+\!427q^3\nonumber\\
&&\quad-221q^2\!-\![11\!-\!2\bar N(a^2\!+\!ab\!+\!b^2)]q\!+\!\bar N(f)_{\PP\FF_q^3}]\nonumber
\end{eqnarray}
\begin{eqnarray}\label{22e2}
&&\quad\quad\quad(9)\hbox{ Fig.\ 1(e) $-$ vertex $2$ or $4$}\\
&&\hspace*{-14pt}q^{13}\!-\!3q^{10}\!-\!15q^9\!+\!9q^8\!+\!107q^7\!-\!262q^6\!+\!337q^5\!-\!315q^4\!+\!199q^3\nonumber\\
&&\quad-45q^2\!-\!19q\!+\!\bar N(f)_{\PP\FF_q^3}\nonumber\\
&&\hspace*{-14pt}[q^{13}\!-\!5q^{10}\!-\!12q^9\!+\!19q^8\!+\!63q^7\!-\!174q^6\!+\!229q^5\!-\!241q^4\!+\!181q^3\nonumber\\
&&\quad-50q^2\!-\![20\!-\!\bar N(a^2\!+\!ab\!+\!b^2)]q\!+\!\bar N(f)_{\PP\FF_q^3}]\nonumber
\end{eqnarray}
\begin{eqnarray}\label{22e3}
&&\quad\quad\quad(10)\hbox{ Fig.\ 1(e) $-$ vertex $3$ or $5$}\\
&&\hspace*{-14pt}q^{13}\!-\!3q^{10}\!-\!18q^9\!+\!25q^8\!+\!71q^7\!-\!214q^6\!+\!282q^5\!-\!246q^4\!+\!133q^3\nonumber\\
&&\quad-13q^2\!-\!24q\!+\!\bar N(f)_{\PP\FF_q^3}\nonumber\\
&&\hspace*{-14pt}[q^{13}\!-\!5q^{10}\!-\!13q^9\!+\!24q^8\!+\!56q^7\!-\!177q^6\!+\!255q^5\!-\!283q^4\!+\!212q^3\nonumber\\
&&\quad-54q^2\!-\!22q\!+\!\bar N(f)_{\PP\FF_q^3}]\nonumber
\end{eqnarray}
\begin{eqnarray}\label{22e4}
&&\quad\quad\quad(11)\hbox{ Fig.\ 1(e) $-$ vertex $6$}\\
&&\hspace*{-14pt}q^{13}\!-\!3q^{10}\!-\!21q^9\!+\!41q^8\!+\!36q^7\!-\!168q^6\!+\!237q^5\!-\!208q^4\!+\!93q^3\nonumber\\
&&\quad+24q^2\!-\!37q\!+\!\bar N(f)_{\PP\FF_q^3}\nonumber\\
&&\hspace*{-14pt}[q^{13}\!-\!5q^{10}\!-\!14q^9\!+\!27q^8\!+\!48q^7\!-\!161q^6\!+\!215q^5\!-\!199q^4\!+\!115q^3\nonumber\\
&&\quad-3q^2\!-\![29\!+\!2\bar N(2)]q\!+\!\bar N(f)_{\PP\FF_q^3}]\nonumber
\end{eqnarray}
\begin{eqnarray}\label{22e5}
&&\quad\quad\quad(12)\hbox{ Fig.\ 1(e) $-$ vertex $7$ or $8$}\\
&&\hspace*{-14pt}q^{13}\!-\!4q^{10}\!-\!16q^9\!+\!33q^8\!+\!38q^7\!-\!157q^6\!+\!214q^5\!-\!185q^4\!+\!96q^3\nonumber\\
&&\quad-7q^2\!-\!15q\!+\!\bar N(f)_{\PP\FF_q^3}\nonumber\\
&&\hspace*{-14pt}[q^{13}\!-\!5q^{10}\!-\!14q^9\!+\!32q^8\!+\!42q^7\!-\!170q^6\!+\!234q^5\!-\!200q^4\!+\!91q^3\nonumber\\
&&\quad+10q^2\!-\!22q\!+\!\bar N(f)_{\PP\FF_q^3}]\nonumber
\end{eqnarray}
\begin{eqnarray}\label{22e6}
&&\quad\quad\quad(13)\hbox{ Fig.\ 1(e) $-$ vertex $9$ or $10$}\\
&&\hspace*{-14pt}q^{13}\!-\!3q^{10}\!-\!15q^9\!+\!11q^8\!+\!99q^7\!-\!252q^6\!+\!333q^5\!-\!318q^4\!+\!213q^3\nonumber\\
&&\quad-61q^2\!-\!18q\!+\!\bar N(f)_{\PP\FF_q^3}\nonumber\\
&&\hspace*{-14pt}[q^{13}\!-\!5q^{10}\!-\!11q^9\!+\!13q^8\!+\!81q^7\!-\!210q^6\!+\!290q^5\!-\!329q^4\!+\!269q^3\nonumber\\
&&\quad-90q^2\!-\![24+2\bar N(2)]q+\bar N(f)_{\PP\FF_q^3}]\nonumber
\end{eqnarray}
\end{result}

Interestingly, the period Eq.\ (\ref{1a}) associated to Fig.\ 1(a), Eqs.\ (\ref{22a1}) -- (\ref{22a3}), has been determined by `exact numerical methods' as weight 11 multiple zeta value \cite{CENSUS}, namely
\begin{eqnarray}\label{23}
P_{7,8}&=&\frac{22383}{20}\zeta(11)-\frac{4572}{5}[\zeta(3)\zeta(5,3)-\zeta(3,5,3)]-700\zeta(3)^2\zeta(5)\nonumber\\
&&\quad+\,1792\zeta(3)\left[\frac{27}{80}\zeta(5,3)+\frac{45}{64}\zeta(5)\zeta(3)-\frac{261}{320}\zeta(8)\right],
\end{eqnarray}
where $\zeta(5,3)=\sum_{i>j}i^{-5}j^{-3}$ and $\zeta(3,5,3)=\sum_{i>j>k}i^{-3}j^{-5}k^{-3}$. So, a multiple zeta period does not imply
that $\bar N$ is a polynomial in $q$. The converse may still be true: If $\bar N$ is a polynomial in $q$ then the period (\ref{1a}) is a multiple zeta value.
It would be interesting to confirm that the period of Fig.\ 1(e) is not a multiple zeta value, but regretfully this is beyond the power of the present `exact numerical
methods' used in \cite{BK} and \cite{CENSUS}.

Most of the above results were found applying Method \ref{met2} at some stage. They are therefore not mathematically proven. However, due to numerous cross-checks
the author considers them as very likely true. We mainly worked with the prime-powers
$q=2$, 3, 4, 5, 7, 8, and 11. The counting for $q=8$ and $q=11$ for graphs with 16 edges (using Eqs.\ (\ref{13b}), (\ref{15b}) or analogous equations for the dual graph polynomial)
were performed on the Erlanger RRZE Computing Cluster.

Resorting to the counting Method \ref{met2} is not necessary for most graphs with 14 edges. Eqs.\ (\ref{15}) and  (\ref{15b}) of Thm.\ \ref{thm1} are powerful enough
to determine the results by pure computer-algebra. But in some cases finding good sequences can be time consuming and the 14-edge results had been
found by the author prior to Eqs.\ (\ref{15}) and  (\ref{15b}). The results have been checked by pure computer-algebra for Fig.\ 1(a) minus vertex 2, 3, 4, or 5
[Eq.\ (\ref{22a2})] and Fig.\ 1(e) minus vertex 2 or 4 [Eq.\ (\ref{22e2})].
In connection with Remark \ref{rem2} we can state the following theorem\footnote{A non-computer reduction of $c_2(q)$ to a singular K3 (isomorphic to $F$ in Thm.\ \ref{thm2})
for the graph Fig.\ 1(e) minus vertex 3 or 5 [see (\ref{22e3})] can be found in \cite{BS}.}:
\begin{thm}\label{thm2}
Let $\Gamma$ be the graph of Fig.\ 1(e) minus vertex $2$ (or minus vertex $4$) and $X$ its graph hypersurface in $\PP^{15}$ defined by the vanishing locus of graph polynomial $\Psi_\Gamma$.
Let $[X]$ be the image of $X$ in the Grothendieck ring $K_0($Var$_k)$ of varieties over a field $k$, let $\LL=[\A_k^1]$ be the equivalence class of the
affine line, and $1=[$Spec $k]$. With $[F]$ the image of the (singular) zero locus of $f$, given by Eq.\ (\ref{22}), in $\PP^3$ we obtain the identity
\begin{eqnarray}\label{23a}
[X]&=&\LL^{14}+\LL^{13}+4\LL^{12}+16\LL^{11}-8\LL^{10}-106\LL^9+263\LL^8-336\LL^7\nonumber\\
&&\quad+\,316\LL^6-199\LL^5+45\LL^4+19\LL^3+[F]\LL^2+\LL+1.
\end{eqnarray}
\end{thm}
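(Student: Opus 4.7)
The plan is to execute Method \ref{met1} on the graph $\Gamma$ of Fig.\ 1(e) minus vertex $2$ (or $4$), but working throughout in $K_0(\mathrm{Var}_k)$ rather than counting points over $\FF_q$. This is legitimate because Prop.\ \ref{prop1}, Thm.\ \ref{thm1} and their supporting identities are purely geometric (inclusion--exclusion, Cartesian products, $\FF_q^\times$-fibrations), so by Remark \ref{rem2} every identity involving $\bar N$ holds verbatim in the Grothendieck ring once $q$ is replaced by $\LL=[\A^1_k]$ and $\bar N(\cdot)$ by the class of the projective complement. Since $\Gamma$ has a $3$-valent vertex attached to a triangle, the natural starting point is Eq.\ (\ref{15b}) of Thm.\ \ref{thm1}: it rewrites $[X]$ as an $\LL$-linear combination of classes of complements of graph hypersurfaces of minors of $\Gamma$, together with one extra term involving the auxiliary polynomial $\delta$ from Eq.\ (\ref{14a}).

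The second step is the bulk of the work: choose an edge sequence that successively completes the remaining vertices and short cycles of each minor, and apply Prop.\ \ref{prop1}(2) to eliminate variables that are linear in all polynomials present, factor whatever comes out by Prop.\ \ref{prop1}(1), and recurse. When no variable is linear in all polynomials, apply Prop.\ \ref{prop1}(3) with a rescaling $x_i\mapsto x_ig/h$ chosen to reinstate linearity (this is exactly the step that was useful in reducing graphs in Fig.\ 1(e), where a coordinate shift turns a polynomial into one that is linear in a further variable). Throughout, the bookkeeping is identical to the one used in Method \ref{met2} to find the numerical polynomial on the right-hand side of Eq.\ (\ref{22e2}); in the Grothendieck ring, every branch of the reduction is followed symbolically and the Tate contributions assemble into the polynomial $\LL^{14}+\LL^{13}+4\LL^{12}+\cdots+\LL+1$ appearing in (\ref{23a}).

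The crucial point is the identification of the residual non-mixed-Tate piece. The numerical result in Eq.\ (\ref{22e2}) already tells us that the only non-polynomial contribution to $\bar N(\Psi_\Gamma)/q^2$ is $\bar N(f)_{\PP\FF_q^3}$; lifting the reduction to $K_0(\mathrm{Var}_k)$, this contribution must arise at the unique step of the algorithm at which all further variables are either linear (hence eliminable in closed form giving a polynomial in $\LL$) or already absent from some polynomial factor. The plan is to track that specific branch and show, by an explicit polynomial isomorphism (possibly after a Cremona-type or linear change of coordinates among $a,b,c,d$), that the remaining polynomial is projectively isomorphic to $f$ of Eq.\ (\ref{22}), yielding the summand $[F]\LL^2$. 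The factor $\LL^2$ then comes from the two variables that remain free (and not constrained) in the fibration over $\PP^3\setminus F$ at that final step.

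The main obstacle is neither the application of Thm.\ \ref{thm1} nor the eventual assembly of the polynomial in $\LL$, but finding an edge sequence for which the reduction actually terminates cleanly: with a poor choice, the number of terms produced by Prop.\ \ref{prop1}(1) explodes, or one is left with several residual polynomials whose vanishing loci are only birationally related to $F$, making the identification with $[F]$ in $K_0(\mathrm{Var}_k)$ delicate. A secondary obstacle is that the rescaling Prop.\ \ref{prop1}(3) introduces spurious $gh=0$ loci that must be subtracted back in; keeping the Grothendieck-ring accounting exact (as opposed to merely congruent $\bmod\ q$, which was enough in Cor.\ \ref{cor4} and Method \ref{met3}) is what distinguishes this proof from the numerical verification in Result \ref{res3}. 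The explicit symbolic reduction carried out by the author, checked against the point-counts produced by Method \ref{met2} for several small prime powers, closes the argument.
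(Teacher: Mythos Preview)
Your plan is essentially the paper's own proof: invoke Remark~\ref{rem2} to work in $K_0(\mathrm{Var}_k)$, start from Eq.~(\ref{15b}) applied to the $3$-valent vertex on the triangle, reduce each minor and each $\delta$-term by Method~\ref{met1}, and isolate the single branch (the second $\delta$-term) that does not reduce to a polynomial in $\LL$ but instead terminates at the quartic surface $f=0$. The paper carries this out with concrete data you omit: the edges $1,2,3,4$ are $(1,3),(1,4),(1,5),(4,5)$; the hardest non-$\delta$ minor is the $14$-edge graph isomorphic to Fig.~1(a) minus vertex~$2$, handled by a second application of Eq.~(\ref{15b}); explicit edge sequences are given for each $\delta$-term together with their intermediate polynomial outputs; and the passage from the residual degree-$5$ three-fold to $f$ is made by the specific affine chart $x_{7,8}=1$ and the rescalings $x_{5,10},x_{7,10}\mapsto x_{5,10},x_{7,10}\cdot(x_{5,8}x_{8,9}+x_{5,8}+x_{8,9})$ followed by $x_{7,10}\mapsto x_{7,10}(x_{8,9}+1)/x_{8,9}$.

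Two small corrections. First, the factor $\LL^2$ in front of $[F]$ is not produced by ``two free variables in a fibration over $\PP^3\setminus F$'' at the last step; it is simply the $q^2$ prefactor that Eq.~(\ref{15b}) already attaches to the term $\bar N(\Psi_{\Gamma-12/34},\delta)$, and the subsequent reduction of that term to $\bar N(f)_{\PP^3}$ contributes no further overall power of $\LL$ to the surviving non-polynomial piece. Second, your final sentence appeals to ``the explicit symbolic reduction carried out by the author, checked against the point-counts''; in the paper the point-counts (Method~\ref{met2}) play no logical role in the proof of Thm.~\ref{thm2}---the argument is the pure computer-algebra reduction, and the numerics serve only as an informal sanity check elsewhere in Result~\ref{res3}.
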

\begin{proof}
By Remark \ref{rem2} and translation from complements to hypersurfaces in projective space Eq.\ (\ref{23a}) is equivalent to Eq.\ (\ref{22e2}).

To prove Eq.\ (\ref{22e2}) we use Eq.\ (\ref{15b}) in Thm.\ \ref{thm1} with edges 1, 2, 3, 4 corresponding to edges (1,3), (1,4), (1,5), (4,5) (edge (1,3) connects
vertex 1 with vertex 3 in Fig.\ 1(e), etc.). Terms without $\delta$ in Eq.\ (\ref{15b}) refer to minors of $\Gamma$. The most complicated of these is the first one
which has 14 edges and is isomorphic to Fig.\ 1(a) minus vertex 2. This minor has again a triangle with a 3-valent vertex such that Eq.\ (\ref{15b})
applies to it. Having two edges less than $\Gamma$ it is relatively easy to calculate $\bar N$ for this minor by Method \ref{met1} with the result given in Eq.\ (\ref{22a2})
[use e.g.\ the sequence  (1,3), (1,4), (1,5), (4,5), (3,9), (3,8), (5,8), (5,9), (4,6), (6,8), (7,8), (4,7), (6,9), (7,9)]. The other minors have 13 edges or less.
They give polynomial contributions to $\bar N(\Psi_\Gamma)$ by Result \ref{res1} which are easy to determine.

The first of the 3 terms containing $\delta$ in Eq.\ (\ref{15b}) can be reduced by Method \ref{met1} using the sequence
(4,7), (4,6), (3,7), (3,9), (6,9), (6,10), (9,10), (7,10), (7,8), (8,9), (5,8), (5,10). With the Maple 9.5-program used by the author (a modified version
of Stembridge's programs) it takes somewhat less than a day on a single core to produce the result which is the polynomial $q^{11}+q^{10}-q^9-6q^8-7q^7+51q^6-95q^5+101q^4-59q^3+11q^2+4q$.

The third term with $\delta$ is much simpler and produces $q^{11}-2q^9-10q^8+28q^7-25q^6+13q^5-18q^4+27q^3-16q^2-\bar N(2)q$ within two minutes using the sequence
(4,6), (6,9), (6,10), (9,10), (4,7), (3,9), (3,7), (5,10), (7,10), (7,8), (8,9), (5,8). Interestingly it cancels the $\bar N(2)$-dependence coming
from the 14-edge minor, Eq.\ (\ref{22a2}).

Only the second term with $\delta$ contains the degree 4 surface in $\PP^3$.
Eliminating variables according to the sequence (3,7), (3,9), (4,7), (4,6), (6,9), (6,10), (9,10), (5,10), (5,8), (8,9), (7,10), (7,8) (if possible) leaves us
(after about one day of computer algebra) with a degree 5 three-fold and two simpler terms which add to an expression
polynomial in $q$ after applying a rescaling, Eq.\ (\ref{8a}), to one of them.
The three-fold depends on the variables $x_{5,10}$, $x_{5,8}$, $x_{8,9}$, $x_{7,10}$, $x_{7,8}$ corresponding to the last five edges of the sequence.
To simplify the three-fold we first go to affine space using Eq.\ (\ref{8b}) with $x_1=x_{7,8}$. Afterwards we rescale $x_{5,10}$ and $x_{7,10}$ by the factor
$x_{5,8}x_{8,9}+x_{5,8}+x_{8,9}$ to obtain a degree 4 two-fold. We decided to apply another rescaling, namely $x_{7,10}\mapsto x_{7,10}(x_{8,9}+1)/x_{8,9}$, to
eliminate powers of 3 from the two-fold that otherwise would have appeared after going back to projective space using Eq.\ (\ref{8b}) backwards.
The variables $a$, $b$, $c$ in Eq.\ (\ref{22}) correspond to $x_{5,10}$, $x_{8,9}$, $x_{7,10}$, respectively. The variable $d$ is introduced by homogenizing the polynomial.
\end{proof}

Counting $\bar N(f)_{\PP\FF_p^3}$ mod $p$ for all primes $<10000$ we observe the following behavior: (This result is an immediate consequence of the fact that
$F$ is a Kummer surface \cite{BS}.)
\begin{result}\label{res4}
For $p>2$ we have $\bar N(f)_{\PP\FF_p^3}\equiv 28k(p)^2\mod p$ with $k(p)=0$ if $p=7$ or $p\equiv3$, $5$, $6$ mod $7$ ($-7$ is not a square in $\FF_p$) and
$k(p)\in \{1,2,\ldots,\lfloor \sqrt{p/7}\rfloor\}$ otherwise. We have (confirmed to 4 digits)
\begin{equation}\label{23b}
\sup_p \frac{7k(p)^2}{p}=1.
\end{equation}
\end{result}

Equation (\ref{23b}) gives us a hint that the surface $f=0$ cannot be reduced to a curve (or a finite field extension) because from the local zeta-function
and the Riemann hypothesis for finite fields we know \cite{HART} that the number of points on a projective non-singular curve of genus $g$ over $F_q$ is given by $q+1+\alpha$ with
$|\alpha|\leq 2g\sqrt{q}$. Thus, modulo $q$ this number is relatively close to 0 for large $q$. We cannot see such a behavior in Eq.\ (\ref{23b}).

We expect that the graphs derived from $P_{8,38}$, $P_{8,39}$, $P_{8,41}$ in \cite{CENSUS} also lead to 16-edge graphs which are counter-examples to Kontsevich's conjecture
none of which being expressible in terms of exceptional primes and finite field extensions. By an argument similar to the one above it seems that the
graph hypersurfaces of these graphs reduce to varieties of dimension $\geq2$. The (likely) absence of curves was not expected by the author.

\section{Outlook: Quantum Fields over $\FF_q$}\label{sect3}

In this section we try to take the title of the paper more literally. The fact that the integrands in Feynman-amplitudes are of algebraic nature allows us to make an attempt to define
a quantum field theory over a finite field $\FF_q$. Our definition will not have any direct physical interpretation. In particular, it should not
be understood as a kind of lattice regularization. In fact, the significance of this approach is unclear to the author.

We start from momentum space. The parametric space used in the previous section is not a good starting point
because it is derived from momentum or position space by an integral transformation that does not translate literally to finite fields.

We work in general space-time dimension $d$ and consider a bosonic quantum field theory with momentum independent vertex-functions. A typical candidate of such
a theory would be $\phi^k$-theory for any integer $k\geq 3$. In momentum space the `propagator' (see \cite{IZ}) is the inverse of a quadric in $d$ affine variables.
Normally one uses $Q=|p|^2+m^2$, where $|p|$ is the euclidean norm of $p\in\RR^d$ and $m$ is the mass of the particle involved.
One may use a Minkowskian metric (or any other metric) as well.

The denominator of the integrand in a Feynman amplitude is a product of $n$ quadrics $Q_i$ for a graph $\Gamma$ with $n$ (interior) edges.
The momenta in these propagators are sums or differences of $h_1$ momentum vectors, with $h_1$ the number of independent cycles of $\Gamma$.
The Feynman-amplitude of $\Gamma$ has the generic form
\begin{equation}\label{25}
A(\Gamma)=\int_{\RR^{dh_1}}{\rm d}^dp_1\cdots{\rm d}^dp_{h_1}\frac{1}{\prod_{i=1}^n Q_i(p)}.
\end{equation}
The asymptotical behavior of the differential form on the right hand side for large momenta is $\sim|p|^c$, where
\begin{equation}\label{26}
c=dh_1-2n
\end{equation}
is called the `superficial degree of divergence' (if $h_1>0$). It is clear that (at least) graphs with $c\geq0$ are ill-defined and need regularization.

From these amplitudes $A(\Gamma)$ we can construct a correlation function as sum over certain classes of graphs weighted by the order
of their automorphism groups,
\begin{equation}\label{27}
\Pi=\sum_\Gamma\frac{g^{|\Gamma|}A(\Gamma)}{|{\rm Aut}(\Gamma)|},
\end{equation}
where $g$ is the coupling and $|\Gamma|$ is an integer that grows with the size of $\Gamma$ (like $h_1$).
The correlation function demands renormalization to control the regularization of the single graphs.
For a renormalizable quantum field theory all graphs $\Gamma$ in the sum have the same superficial degree of divergence.
In a super-renormalizable theory (at low dimensions $d$) the divergence becomes less
for larger graphs, whereas the converse is true for a non-renormalizable theory (like quantum gravity).

Working over a finite field it seems natural to replace the integral in Eq.\ (\ref{25}) by a sum
\begin{equation}\label{28}
A(\Gamma)_{\FF_q}=\sum_{p\in\FF_q^{dh_1}\!:\,Q_i(p)\neq0}\frac{1}{\prod_{i=1}^n Q_i(p)}.
\end{equation}
The amplitude is well-defined (whereas $|{\rm Aut}(\Gamma)|$ in the denominator of Eq.\ (\ref{27}) causes problems for small $q$).
It is zero in many cases.
\begin{lem}\label{lem4}
Let $\Gamma$ be a graph with $n$ edges, $h_1>0$ independent cycles and superficial degree of divergence $c$. If $q>2$ then
\begin{equation}\label{29}
A(\Gamma)_{\FF_q}=0\quad\hbox{if}\quad(q-1)c+2n>0.
\end{equation}
\end{lem}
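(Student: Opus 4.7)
The plan is to convert the sum over the open locus $\{Q_i(p)\neq 0\}$ into an unrestricted sum of a polynomial over all of $\FF_q^{dh_1}$, then invoke the standard vanishing of monomial sums on $\FF_q^m$ to conclude the total is zero whenever the degree is too small.

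First, I would use the Fermat identity $1/x = x^{q-2}$ for $x\in\FF_q^\times$. The hypothesis $q>2$ is essential here: it gives $q-2\geq 1$, so $0^{q-2}=0$. Consequently the function $Q_i(p)^{q-2}$ agrees with $1/Q_i(p)$ on $\{Q_i(p)\neq 0\}$ and vanishes whenever $Q_i(p)=0$. Therefore
\[
A(\Gamma)_{\FF_q} \;=\; \sum_{p\in\FF_q^{dh_1}:\,Q_i(p)\neq 0}\;\prod_{i=1}^n Q_i(p)^{-1}\;=\;\sum_{p\in\FF_q^{dh_1}}\;\prod_{i=1}^n Q_i(p)^{q-2},
\]
where the extension of the summation range introduces only zero summands. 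The new integrand is a genuine polynomial in the $m:=dh_1$ scalar coordinates of $p$, of total degree at most $2n(q-2)$, since each $Q_i$ is a quadric.

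Second, I would expand this polynomial as a $\FF_q$-linear combination of monomials $\prod_{j=1}^m p_j^{a_j}$ and use the elementary fact that
\[
\sum_{x\in\FF_q} x^a \;=\; \begin{cases}-1 & \text{if }(q-1)\mid a\text{ and }a>0,\\ 0 & \text{otherwise.}\end{cases}
\]
By Fubini, $\sum_{p\in\FF_q^m}\prod_j p_j^{a_j}$ vanishes unless every exponent $a_j$ is a strictly positive multiple of $q-1$; in particular a nonvanishing monomial must satisfy $\sum_j a_j \geq m(q-1) = dh_1(q-1)$.

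Third, I would match this against the degree bound. If every monomial in $\prod_i Q_i^{q-2}$ has total degree strictly less than $dh_1(q-1)$, then every term sums to $0$ and hence $A(\Gamma)_{\FF_q}=0$. This holds as soon as $2n(q-2)<dh_1(q-1)$, which rearranges to $(q-1)(dh_1-2n)+2n>0$, i.e.\ precisely $(q-1)c+2n>0$ with $c=dh_1-2n$. I do not foresee a serious obstacle; the only subtle point is the use of $q>2$ to guarantee $0^{q-2}=0$, which is exactly what makes the passage from the restricted sum to the polynomial sum legitimate.
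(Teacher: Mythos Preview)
Your proof is correct and follows essentially the same route as the paper's own argument: rewrite $1/Q_i$ as $Q_i^{q-2}$ using $x^{q-1}=1$ on $\FF_q^\times$, extend the sum to all of $\FF_q^{dh_1}$ since $q>2$ makes $0^{q-2}=0$, and then invoke the vanishing of $\sum_{x\in\FF_q}x^k$ unless $k$ is a positive multiple of $q-1$ to force the degree condition $2n(q-2)<dh_1(q-1)$, which is exactly $(q-1)c+2n>0$.
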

\begin{proof}
For all $x\in\FF_q^\times$ we have $x^{q-1}=1$. Hence the amplitude (\ref{28}) can be written as
\begin{equation}\label{30}
A(\Gamma)_{\FF_q}=\sum_{p\in\FF_q^{dh_1}}\prod_{i=1}^n Q_i(p)^{q-2}
\end{equation}
where the restriction to non-zero $Q_i$ can be dropped for $q>2$. The right hand side is a polynomial in the coordinates of the $p_i$ of degree $2n(q-2)$.
On the other hand we have (we use $0^0:=1$)
\begin{equation}\label{31}
\sum_{x\in\FF_q}x^k=\left\{\begin{array}{rl}-1&\hbox{if }0<k\equiv0\mod(q-1)\\
0&\hbox{otherwise,}\end{array}\right.
\end{equation}
which is obvious if one multiplies both sides of the equation by any $1\neq y^k\in\FF_q^\times$ [if $k\not\equiv0$ mod $(q-1)$].
In particular, the sum over a polynomial in $x$ vanishes unless the
polynomial has a minimum degree $q-1$. In case of $dh_1$ variables we need a minimum degree $dh_1(q-1)$.
The right hand side of (\ref{30}) does not have this minimum degree if $2n(q-2)<dh_1(q-1)$ which by Eq.\ (\ref{26}) gives Eq.\ (\ref{29}).
\end{proof}
We see that only superficially convergent graphs (with $c<0$) can give a non-zero amplitude. The complexity of the graph is limited by $q-1$ times
the degree of convergence. This means for the three possible scenarios of quantum field theory:
\begin{enumerate}
\item If the quantum field theory is non-renormalizable then $c$ becomes positive for sufficiently large graphs.
All correlation functions are polynomials in the coupling $g$ of universal ($q$-independent) maximum degree.
\item If the quantum field theory is renormalizable then $c$ is constant for all graphs that contribute to a correlation function.
The correlation function becomes a polynomial in the coupling with degree that may grow with $q$.
If the correlation function has $c\geq0$ only the tree level (with $h_1=0$) contributes.
\item If the quantum field theory is super-renormalizable then $c$ becomes negative for sufficiently large graphs.
In this case all correlation functions may be infinite (formal) power series.
\end{enumerate}
It is interesting to observe that finite fields give an upside down picture to normal quantum field theories. The most problematic non-renormalizable
quantum field theories give the simplest results whereas the most accessible super-renormalizable theories may turn out to be the most complicated ones
over finite fields. In between we have the renormalizable quantum field theories that govern the real world.

Another theme of interest could be an analogous study of $p$-adic quantum field theories.
\bibliographystyle{plain}
\renewcommand\refname{References}

\end{document}